\newif\ifpictures
\numberwithin{equation}{section}
\newtheorem{thm}{Theorem}
\newtheorem{prop}[thm]{Proposition}
\newtheorem{lemma}[thm]{Lemma}
\newtheorem{cor}[thm]{Corollary}
\theoremstyle{definition}
\newtheorem{example}[thm]{Example}
\newtheorem{remark1}[thm]{Remark}
\newtheorem{openproblem1}[thm]{Open problem}
\newtheorem{definition}[thm]{Definition}
\newenvironment{rem}{\begin{remark1}\rm}{\end{remark1}}
\numberwithin{thm}{section}
\newcounter{FNC}[page]
\def\newfootnote#1{{\addtocounter{FNC}{2}$^\fnsymbol{FNC}$%
     \let\thefootnote\relax\footnotetext{$^\fnsymbol{FNC}$#1}}}
\newcommand{\N}{\mathbb{N}}
\newcommand{\R}{\mathbb{R}}
\newcommand{\sonc}{\mathrm{sonc}}
\newcommand{\sage}{\mathrm{sage}}
\newcommand{\nc}{\mathrm{nc}}
\DeclareMathOperator{\conv}{conv}
\DeclareMathOperator{\relinter}{relint}
\DeclareMathOperator{\cl}{cl}
\DeclareMathOperator{\myspan}{span}
\title[The dual cone of sums of non-negative circuit polynomials]{The dual cone
  of sums of non-negative circuit polynomials}
\author{Mareike Dressler}
\address{Mareike Dressler: University of California, San Diego,
Department of Mathematics, 9500 Gilman Drive,
La Jolla, CA 92093, USA}
\author{Helen Naumann}
\author{Thorsten Theobald}
\address{Helen Naumann, Thorsten Theobald:
Goethe-Universit\"at, FB 12 -- Institut f\"ur Mathematik,
Postfach 11 19 32, 60054 Frankfurt am Main, Germany}
\subjclass[2010]{14P05, 52A20, 90C30}
\keywords{Positive polynomials, sums of non-negative circuit polynomials, dual cone, polynomial optimization}
\begin{document}
\begin{abstract}
For a non-empty, finite subset $\mathcal{A} \subseteq \N_0^n$,
denote by $C_{\sonc}(\mathcal{A}) \in \R[x_1, \ldots, x_n]$ the 
cone of sums of non-negative circuit polynomials with support 
$\mathcal{A}$. We derive a representation of the dual cone 
$(C_{\sonc}(\mathcal{A}))^*$ and deduce a resulting optimality 
criterion for the use of sums of non-negative circuit polynomials 
in polynomial optimization.
\end{abstract}

\maketitle

\section{Introduction}

Non-negative polynomials are ubiquitous in real algebraic geometry
and occur in many applications (see, e.g., 
\cite{bpt-2012, lasserre-positive-polynomials}). 
Whenever a polynomial
$p \in \R[x_1, \ldots, x_n]$ can be written as a sum of squares,
then it is clearly non-negative on $\R^n$. Recently, there has
been quite some interest in alternative certificates 
for non-negative polynomials. Iliman and de Wolff introduced the 
class of sums of non-negative circuit polynomials (SONC) as such an alternative
(\cite{iliman-dewolff-resmathsci}, see 
also \cite{averkov-2019,diw-2017,wang-2018}), 
where non-negativity
of a circuit polynomial is characterized in terms of the circuit number (as detailed in Section~\ref{se:circuits}). 
This approach is closely related to the viewpoint of the arithmetic-geometric
inequality and the relative entropy formulation by Chandrasekaran and 
Shah \cite{chandrasekaran-shah-2016},
whose setup is more adapted to the ground set $\R_{>0}^n$ (or, equivalently, 
to weighted exponential sums). For specific classes of polynomials, testing whether a given polynomial
$p \in \R[x_1, \ldots, x_n]$ 
can be written as a sum of non-negative circuit polynomials, can be formulated
as a geometric program (see \cite{iliman-dewolff-siamopt})
or a relative entropy program (see \cite{wang-2018}).

Let $\mathcal{A}$ be a non-empty, finite subset of $\N_0^n$ and
$\mathcal{A}^k$ be the set of $k$-tuples of $\mathcal{A}$.
For $k \ge 2$ let
\[
\begin{array}{rcl}
  I_k(\mathcal{A}) & = & \big\{ (\alpha(1), \ldots, \alpha(k),\beta)
  \in \mathcal{A}^{k+1} \, : \,
\alpha(1), \ldots, \alpha(k) \in (2 \N_0)^n \text{ affinely independent},\\ [0.5ex]
&& \; \; \beta \in \relinter(\conv\{\alpha(1), $ $\ldots, \alpha(k) \}) \cap \N_0^n \big\}.
\end{array}
\]
By convention, set $I_1(\mathcal{A}) = \{(\alpha(1)) \in \mathcal{A}^1 \, : \,
  \alpha(1) \in (2 \N_0)^n\}$. 

For $A \in I_k(\mathcal{A})$ let $P_{n,A}$ denote the set of
polynomials  in $\R[x_1, \ldots, x_n]$ whose
supports are contained in $A$ and which are non-negative on $\R^n$.
We can now define the cone of sums of non-negative circuit polynomials (SONC),
see \cite{averkov-2019,iliman-dewolff-resmathsci}.

\begin{definition}Let $\mathcal{A}$ be a non-empty, finite subset of 
$\N_0^n$. The Minkowski sum
\[
  C_{\sonc}(\mathcal{A}) \ = \ \sum_{A \ \in \ \bigcup_{k=1}^{n+1} I_k(\mathcal{A})} P_{n,A}
\]
defines the cone of SONC polynomials whose supports are all contained
in $\mathcal{A}$, for short, the \emph{cone of SONC polynomials with
support $\mathcal{A}$}.
\end{definition}

For any non-empty, finite subset $\mathcal{A} \subseteq \N_0^n$,
the set $C_{\sonc}(\mathcal{A})$ is a closed convex cone. 
Additionally, note that
every $p \in \R[x_1, \ldots, x_n]$ which is
a sum of non-negative circuit polynomials can indeed be written
as a sum $p = \sum_{i=1}^k q_i$
of non-negative circuit polynomials $q_i$ whose supports are all contained
in the support of $p$ (Wang \cite{wang-supports}, cf.\ also 
Murray, Chandrasekaran and Wierman \cite{mcw-2018}). 

In this paper, we consider the natural duality pairing between real polynomials 
  $f = \sum_{\alpha \in \mathcal{A}} c_{\alpha} x^{\alpha}$
supported on $\mathcal{A}$ and vectors $v \in \R^{\mathcal{A}}$, 
which is given by
\begin{equation}
\label{eq:pairing}
  v(f) \ = \ \sum\limits_{\alpha\in\mathcal{A}} c_\alpha v_\alpha,
\end{equation}
where the $c_{\alpha}$ are the coefficients of $f$. With respect
to this pairing, the dual cone $(C_{\sonc}(\mathcal{A}))^*$, 
is defined as 
\[
   (C_{\sonc}(\mathcal{A}))^* \ = \ \left\{v\in\R^{\mathcal{A}} \, : \, 
     v(f)\ge 0 \text{ for all } f \in C_{\sonc}(\mathcal{A}) \right\}.
\]
We derive a natural description for this dual 
cone $(C_{\sonc}(\mathcal{A}))^*$, see Theorem~\ref{th:dual-sonc}.
This description is a variant of the result of Chandrasekaran and
Shah who provided a description for the dual SAGE
cone (\emph{sums of arithmetic-geometric exponentials} 
\cite{chandrasekaran-shah-2016}), see Section~\ref{se:circuits} for a formal definition.
For the special case of univariate quartics, we provide a quantifier-free 
representation in terms of  polynomial inequalities (see Corollary~\ref{co:univariate-quartics}).
Building upon the characterization of the dual SONC cone, we then deduce
a corresponding sufficient optimality criterion for the SONC approach in
polynomial optimization, see Theorem~\ref{th:optimal-point}.

Beyond the specific results, the purpose of the paper is to
provide additional understanding of the interplay of the SONC
and SAGE cones as well as the interplay of the circuit number
in the SONC approach, the relative entropy function underlying
the SAGE approach and the exponential cone from the theory
of optimization.

We remark that polynomial optimization techniques based on the SONC
cone can generally be combined with those based on the cone of sums of squares
(see \cite{averkov-2019} and \cite{karaca-2017}).

The paper is structured as follows. In Section~\ref{se:circuits},
we review the connection between the circuit number and relative
entropy programs. In Section~\ref{se:dual-cone}, we derive
the description of the dual SONC cone and consider in detail the 
dual cone for the specific case of univariate quartics. 
Section~\ref{se:dualprograms} applies the characterization on
the SONC-based lower bounds in optimization and provides a
sufficient optimization criterion.

\section{The circuit number and relative entropy programs\label{se:circuits}}

Non-negative circuit polynomials can be characterized either in terms of circuit numbers or in terms of the relative entropy function. The sets $\R_{>0}$, $\R_+$,
$\R_{-}$, $\R_{\neq 0}$ denote the 
positive, non-negative, non-positive and non-zero real numbers, respectively.

A \emph{circuit polynomial} $p \in \R[x_1, \ldots, x_n]$ is a polynomial
of the form $p(x) = \sum_{i=1}^k c_i x^{\alpha(i)} + \delta x^{\beta}$, with $k \le n+1$, coefficients $c_i \in \R_{>0}$, $\delta \in \R$, and exponents $\alpha{(1)}, \ldots, \alpha{(k)} \in (2\N_0)^n$ being affinely independent and $\beta \in \N_0^n$, such that $\beta \in \relinter(\conv\{\alpha(1), $ $\ldots, \alpha(k) \})$. The \emph{circuit number} $\Theta_p$ of $p$ is defined as 
$\Theta_p \ = \ \prod_{i=1}^k \left(\frac{c_i}{\mu_i}\right)^{\mu_i}$,
  where $\mu \in \R_{>0}^k$ denotes the barycentric coordinates of 
  $\beta$ with respect to $\alpha(1), \ldots, \alpha(k)$, i.e., 
  $\sum_{i=1}^k \mu_i = 1$ and $\beta = \sum_{i=1}^k \mu_i \alpha(i)$.
  Note that since $\relinter \{\alpha(1)\} = \{\alpha(1)\}$, these definitions 
  formally also
  make sense for $k= 1$, but notice that in this case $\beta = \alpha(1)$.

The \emph{relative entropy function} $D$ is defined as
$\R_{>0}^n \times \R_{>0}^n \to \R$,
\[
  D(\nu, \lambda) \ = \ \sum_{j=1}^n \nu_j \log \left( \frac{\nu_j}{\lambda_j} \right), \quad \nu, \lambda \in \R_{>0}^n
\]
and it can be continuously extended to $\R_+^n \times \R_{>0}^n \to \R$
(see \cite{chandrasekaran-shah-rel-entropy}).

On the set $\R^n$, non-negativity of a circuit polynomial has been characterized
by Iliman and de Wolff in terms of the circuit number \cite{iliman-dewolff-resmathsci}. 
On the set
$\R_{> 0}^n$, non-negativity of a circuit polynomial has been characterized
by Chandrasekaran and Shah \cite{chandrasekaran-shah-2016} 
in terms of the relative entropy
function. Theorems~\ref{th:basicequivalence} and~\ref{th:basicequivalence2}
review these statements in a uniform way (and thus slightly extend them).
In particular, the proofs of these statements exhibit how to transfer from the
circuit characterization to the relative entropy characterization and vice versa.
Let $e$ be Euler's number and $\mathbf{1}$ denote the all-ones-vector.

\begin{thm}\label{th:basicequivalence}
For a circuit polynomial $p\in \R[x_1, \ldots, x_n]$ with $p(x)=\sum\limits_{i=1}^k c_ix^{\alpha(i)}+\delta x^{\beta}$,
the following statements are equivalent.
\begin{enumerate}
\item $p$ is a circuit polynomial which is non-negative on $\R^n_+$.
\item $\delta \ge -\Theta_p$.
\item There exists some $\nu \in \R_+^{k}$ such that 
  $\sum_{i=1}^k \alpha(i) \nu_i = (\mathbf{1}^T \nu) \beta$ and
  $D(\nu, e \cdot c) \le \delta$.
\end{enumerate}
\end{thm}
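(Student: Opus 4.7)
The plan is to prove (1)$\Leftrightarrow$(2) by the weighted arithmetic-geometric mean (AM-GM) inequality, and then (2)$\Leftrightarrow$(3) by explicitly minimizing $D(\nu, e\cdot c)$ over the affine constraint set. The key observation is that the affine independence of the $\alpha(i)$ together with $\beta \in \relinter \conv\{\alpha(1),\ldots,\alpha(k)\}$ pins down the structure rigidly enough to drive both parts.

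For (1)$\Leftrightarrow$(2), let $\mu \in \R_{>0}^k$ be the barycentric coordinates of $\beta$ with respect to $\alpha(1), \ldots, \alpha(k)$, so that $\sum_i \mu_i = 1$ and $\sum_i \mu_i \alpha(i) = \beta$. For $x \in \R_{>0}^n$, the weighted AM-GM inequality gives
\[
\sum_{i=1}^{k} c_i x^{\alpha(i)} \ = \ \sum_{i=1}^{k} \mu_i \cdot \tfrac{c_i}{\mu_i} x^{\alpha(i)} \ \ge \ \prod_{i=1}^{k}\Bigl(\tfrac{c_i}{\mu_i}\Bigr)^{\mu_i} x^{\sum_i \mu_i \alpha(i)} \ = \ \Theta_p \, x^{\beta},
\]
so $p(x) \ge (\Theta_p + \delta) x^{\beta}$ and (2) immediately yields non-negativity on $\R_{>0}^n$, extended to $\R_+^n$ by continuity. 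For the converse, equality in AM-GM occurs precisely when all terms $(c_i/\mu_i) x^{\alpha(i)}$ coincide; affine independence of $\alpha(1), \ldots, \alpha(k)$ makes this condition solvable in $\log x_1, \ldots, \log x_n$, yielding some $x^{*} \in \R_{>0}^n$ at which $p(x^{*}) = (\Theta_p + \delta)(x^{*})^{\beta}$. Non-negativity at $x^{*}$ then forces $\delta \ge -\Theta_p$.

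For (2)$\Leftrightarrow$(3), I would first describe the feasible set $F = \{\nu \in \R_+^k : \sum_i \alpha(i)\nu_i = (\mathbf{1}^T\nu)\beta\}$ explicitly. If $\nu \in F$ is nonzero, dividing the equation by $\mathbf{1}^T\nu$ represents $\beta$ as an affine combination of the $\alpha(i)$ with non-negative coefficients; by affine independence this representation is unique and hence equals the barycentric one, which lies in $\R_{>0}^k$ because $\beta$ is in the relative interior. Thus $\nu = s\mu$ with $s = \mathbf{1}^T\nu > 0$, and a direct calculation using $\log \Theta_p = \sum_i \mu_i \log(c_i/\mu_i)$ gives
\[
D(s\mu, e c) \ = \ \sum_{i=1}^{k} s\mu_i \log \tfrac{s\mu_i}{e c_i} \ = \ s\log s \;-\; s \log \Theta_p \;-\; s,
\]
whose minimum over $s > 0$ is attained at $s^{*} = \Theta_p$ with value $-\Theta_p$; the boundary point $\nu = 0$ contributes $D(0, ec) = 0 > -\Theta_p$ via the continuous extension. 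Consequently $\min_{\nu \in F} D(\nu, e c) = -\Theta_p$, so (3) holds if and only if $\delta \ge -\Theta_p$, i.e., (2).

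The main obstacle will be the rigidity argument identifying $F$ with $\R_{>0} \cdot \mu \cup \{0\}$; it rests on both affine independence and the relative interior hypothesis to rule out representations of $\beta$ supported on proper subsets of $\{\alpha(1), \ldots, \alpha(k)\}$. Once this is established, the rest reduces to weighted AM-GM and a one-variable calculus exercise, and the degenerate case $k = 1$ (where $\beta = \alpha(1)$, $\mu = (1)$, and $\Theta_p = c_1$) specializes consistently without any separate treatment.
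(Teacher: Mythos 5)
Your proposal is correct, but it takes a partly different route from the paper. The paper does not prove the equivalence of (1) with the other conditions from scratch: it cites the relative entropy characterization (1)$\Leftrightarrow$(3) from Chandrasekaran--Shah and only proves (2)$\Leftrightarrow$(3), by minimizing $D(\nu, e\cdot c)$ along the ray $\rho\mu$ (Lemma~\ref{le:entropy1}) and computing the minimal value $-e^{-D(\mu,c)}=-\Theta_p$. Your (2)$\Leftrightarrow$(3) argument is essentially that same computation in the parametrization $\nu=s\mu$, with one worthwhile addition: you verify explicitly that affine independence together with the relative-interior hypothesis forces the feasible set $F$ to be exactly the ray $\R_+\,\mu$, a fact the paper's lemma uses implicitly when it restricts the minimization to that ray (and your treatment of the boundary point $\nu=0$ via the continuous extension is also correct). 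Where you genuinely diverge is in replacing the cited equivalence (1)$\Leftrightarrow$(3) by a direct, self-contained proof of (1)$\Leftrightarrow$(2) via weighted AM--GM: the inequality direction gives $p(x)\ge(\Theta_p+\delta)x^{\beta}$ on $\R_{>0}^n$, and for the converse you solve the linear system $\alpha(i)^T y+\log(c_i/\mu_i)=\text{const}$ in $(y,\text{const})$ --- solvable precisely because the vectors $(\alpha(i),-1)$ are linearly independent, i.e.\ by affine independence --- to produce an equality point $x^*\in\R_{>0}^n$ at which non-negativity forces $\delta\ge-\Theta_p$; the case $k=1$ indeed needs no separate treatment. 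This is in essence the Iliman--de~Wolff circuit-number argument transplanted to the orthant $\R_+^n$; it buys a proof independent of the Chandrasekaran--Shah lemma, whereas the paper's route is shorter and deliberately exhibits the translation between the circuit-number condition (2) and the entropy condition (3), which is the point it wants to make for the later duality arguments.
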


The existential quantification in condition (3) is essential for its
algorithmic use (see \cite{chandrasekaran-shah-2016}). However, for the purpose
of our analysis, it is useful to characterize for which $\nu \in \R_+^k$ 
the entropy function $D(\nu, e \cdot c)$ in the condition of (3)
actually takes its minimum.

\begin{lemma}\label{le:entropy1}
Let $p\in \R[x_1, \ldots, x_n]$ be a circuit polynomial. 
On the set 
$\{ \nu \in \R_+^k \, : \, 
  \sum_{i=1}^k \alpha(i) \nu_i = (\mathbf{1}^T \nu) \beta\}$,
the function $\nu \mapsto D(\nu, e \cdot c)$ ($\nu \in \R_+^k)$
takes its minimum value at 
$e^{-D(\mu, c)} \mu$, where $\mu$ denotes the barycentric coordinates
of $\beta$ w.r.t.\ $\alpha(1), \ldots, \alpha(k)$.
\end{lemma}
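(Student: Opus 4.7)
The plan is to reduce the constrained $k$-dimensional minimization to a $1$-dimensional optimization, by exploiting affine independence of the exponents $\alpha(1), \ldots, \alpha(k)$. Once reduced to one variable, the critical point equation is solved by elementary calculus.

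First I would observe that if $\nu \in \R_+^k$ satisfies $\sum_{i=1}^k \alpha(i)\nu_i = (\mathbf{1}^T \nu)\beta$, then setting $s := \mathbf{1}^T \nu \ge 0$ gives two cases. If $s = 0$, then $\nu = 0$ (since $\nu \ge 0$) and $D(\nu, e\cdot c) = 0$. If $s > 0$, then $\nu/s$ is a probability vector on $\{1,\dots,k\}$ with $\sum_i (\nu_i/s)\alpha(i) = \beta$; because the $\alpha(i)$ are affinely independent, the barycentric coordinates of $\beta$ are unique, so $\nu/s = \mu$. Hence the feasible set is exactly $\{s\mu : s \ge 0\}$.

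Next I would substitute $\nu = s\mu$ into the objective. Using $\sum_i \mu_i = 1$ and $\log(s\mu_i/(ec_i)) = \log s + \log(\mu_i/c_i) - 1$, a short computation gives
\[
  D(s\mu,\, e\cdot c) \ = \ s\log s \;-\; s \;+\; s\cdot D(\mu, c),
\]
so the problem reduces to minimizing $\varphi(s) := s\log s - s + s\cdot D(\mu,c)$ over $s \ge 0$ (with $\varphi(0) = 0$ by the continuous extension). Setting $\varphi'(s) = \log s + D(\mu,c) = 0$ yields the unique critical point $s^* = e^{-D(\mu,c)}$, at which $\varphi''(s^*) = 1/s^* > 0$, so it is a strict minimum; the minimum value is $-e^{-D(\mu,c)} < 0$, which is indeed smaller than $\varphi(0) = 0$. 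Therefore the minimizer is $\nu = s^*\mu = e^{-D(\mu,c)}\mu$, as claimed.

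I do not expect a serious obstacle: the two conceptual ingredients (uniqueness of barycentric coordinates from affine independence, and a one-variable convex minimization) are both clean. The only point requiring mild care is the degenerate boundary behaviour at $s = 0$ and the degenerate case $k = 1$, where $\mu = (1)$ and $\beta = \alpha(1)$; here the constraint is vacuous, but the same scalar computation applies verbatim and returns $\nu_1 = e^{-D(\mu,c)} = e^{\,\log c_1 - 1} \cdot \,$ something benign, so the statement still holds without modification.
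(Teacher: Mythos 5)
Your proof is correct and follows essentially the same route as the paper: restrict to the ray $\{s\mu : s\ge 0\}$, compute the derivative $\log s + D(\mu,c)$, and identify the critical point $s^*=e^{-D(\mu,c)}$. In fact you are slightly more explicit than the paper in two welcome places -- justifying via affine independence that the feasible set is exactly this ray, and comparing the value $-e^{-D(\mu,c)}<0$ with the boundary value at $s=0$ (your parenthetical $k=1$ evaluation should give $e^{-D(\mu,c)}=c_1$, but this aside does not affect the argument).
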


\begin{proof}
Let $\mu$ be the barycentric coordinates of $\beta$ with respect to
$\alpha(1), \ldots, \alpha(k)$, and consider the function 
$g: \nu \mapsto D(\nu, ec) = \sum_{i=1}^k \nu_i \log \frac{\nu_i}{e \cdot c_i}$. 
We ask for which $\rho \ge 0$ the function
\[
  h(\rho) \ = \ g(\rho \mu) \ = \ \sum_{i=1}^k \rho \mu_i \cdot 
    \log \left( \frac{\rho \mu_i}{e \cdot c_i} \right)
\]
is minimized. Its derivative is
\begin{eqnarray*}
  h'(\rho) & = & \sum_{i=1}^k \left( 
      \mu_i \cdot \log \left( \frac{\rho \mu_i}{e \cdot c_i} \right)
      + \rho \mu_i \cdot \frac{1}{\rho} \right) \\
  & = & \log \rho + 1 + \sum_{i=1}^k  \mu_i \log \left( \frac{\mu_i}{e \cdot c_i} \right) \\
  & = & \log \rho + D(\mu, c),
\end{eqnarray*}
where we used $\sum_{i=1}^k \mu_i = 1$.
The derivative becomes zero for
\[
  \log \rho = - D(\mu,c),
\]
and due to $h''(\rho) = 1/\rho$, we obtain
$h''(\rho^*) > 0$ for the root $\rho^*$ of $h'(\rho)$. Hence, $\rho^*$ is
a minimum and $\rho^*\mu$ minimizes $g$.
\end{proof}

\begin{proof}[Proof of Theorem~\ref{th:basicequivalence}]
The equivalence of (1) and (3) is well-known (see \cite[Lemma 2.2]{chandrasekaran-shah-2016}). 
We show the equivalence of (2) and (3).

Let $\mu$ be the barycentric coordinates of $\beta$ w.r.t.\ 
$\alpha(1), \ldots, \alpha(k)$. By Lemma~\ref{le:entropy1}, on the set 
$\{ \nu \in \R_+^k \, : \, \sum_{i=1}^k \alpha(i) \nu_i 
= (\mathbf{1}^T \nu) \beta\}$, the function
$D(\nu,e \cdot c)$ is minimized at $\rho \mu$ where $\rho = e^{-D(\mu,c)}$.
Hence, the entropy condition in (3) is equivalent to 
\[
  D(e^{-D(\mu,c)} \mu, e \cdot c) \ \le \ \delta,
\]
which can be rewritten as
\begin{equation}
  \label{eq:eq1}
  e^{-D(\mu,c)} \sum_{i=1}^k \left( \mu_i \log \frac{ e^{-D(\mu,c)} \mu_i}{e \cdot c_i} \right)
  \ \le \ \delta .
\end{equation}
Since
\[
\sum_{i=1}^k \left( \mu_i \log \frac{ e^{-D(\mu,c)} \mu_i}{e \cdot c_i} \right) = 
  \sum_{i=1}^k \mu_i \left( \log \frac{\mu_i}{c_i} + \log e^{-D(\mu,c)} + 
  \log \frac{1}{e} \right)
  = 
  D(\mu,c) - D(\mu,c) -1,
\]
\eqref{eq:eq1} is equivalent to
\[
  -e^{-D(\mu,c)} \ \le \ \delta
\]
and thus to
$- \prod_{i=1}^{k} \left( \frac{c_i}{\mu_i} \right)^{\mu_i} \ \le \ \delta$,
which is exactly the circuit condition (2).
\end{proof}

\begin{thm}\label{th:basicequivalence2}
For a circuit polynomial $p \in \R[x_1, \ldots, x_n]$, the following statements are equivalent.
\begin{enumerate}
\item $p$ is a non-negative circuit polynomial, i.e., a circuit polynomial which is non-negative on $\R^n$.
\item $|\delta| \le \Theta_p$ and $\beta \not\in (2 \N_0)^n$
  $\; \;$ or $\; \;$ $\delta \ge -\Theta_p$ and $\beta \in (2 \N_0)^n$.
\item There exists some $\nu \in \R_+^{k}$ such that 
  $\sum_{i=1}^k \alpha(i) \nu_i = (\mathbf{1}^T \nu) \beta$ and
  \[
D(\nu, e \cdot c) \le - |\delta| \text{ and } \beta \not\in (2\N_0)^n
 \quad \text{ or } \quad
 D(\nu, e \cdot c) \le \delta \text{ and } \beta \in (2 \N_0)^n \, .
\]
\end{enumerate}
\end{thm}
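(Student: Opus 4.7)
The plan is to reduce Theorem~\ref{th:basicequivalence2} to the already established Theorem~\ref{th:basicequivalence} by a sign/symmetry argument that converts non-negativity on $\R^n$ into non-negativity on $\R_{+}^n$ for a possibly modified polynomial. Since $\alpha(1), \ldots, \alpha(k) \in (2\N_0)^n$, each monomial $x^{\alpha(i)}$ is non-negative on all of $\R^n$ and depends only on $|x_1|, \ldots, |x_n|$. Hence the sign of $p(x)$ for fixed $|x|$ is controlled entirely by the sign of $\delta x^\beta$, and the argument splits cleanly according to whether $\beta \in (2\N_0)^n$ or not.

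First I would handle the case $\beta \in (2\N_0)^n$. Then $x^\beta \geq 0$ on $\R^n$ as well, so $p(x) = p(|x_1|, \ldots, |x_n|)$, which shows $p \geq 0$ on $\R^n$ iff $p \geq 0$ on $\R_{+}^n$. Condition (2) reduces to $\delta \geq -\Theta_p$ and condition (3) reduces to the existence of $\nu \in \R_{+}^k$ in the affine slice with $D(\nu, e\cdot c) \leq \delta$; so Theorem~\ref{th:basicequivalence} applied directly yields (1) $\Leftrightarrow$ (2) $\Leftrightarrow$ (3) in this case.

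Next I would treat the case $\beta \notin (2\N_0)^n$. Pick an index $j$ with $\beta_j$ odd; flipping the sign of $x_j$ leaves every $x^{\alpha(i)}$ invariant while it changes the sign of $x^\beta$. Consequently, for any $x \in \R^n$, setting $y_i := |x_i|$ gives $p(x) = \sum_{i=1}^k c_i y^{\alpha(i)} \pm \delta\, y^\beta$, where the sign is controlled by $\sgn(x^\beta)$ and can be made either $+$ or $-$ by an appropriate choice of signs of the coordinates. Therefore $p \geq 0$ on $\R^n$ is equivalent to
\[
\sum_{i=1}^k c_i\, y^{\alpha(i)} - |\delta|\, y^\beta \ \geq \ 0 \quad \text{for all } y \in \R_{+}^n.
\]
Applying Theorem~\ref{th:basicequivalence} to the circuit polynomial $\sum_{i=1}^k c_i x^{\alpha(i)} - |\delta| x^\beta$ (whose circuit number is unchanged because $\Theta_p$ depends only on the positive coefficients $c_i$ and on $\mu$), the equivalence of (1) and (2) becomes $-|\delta| \geq -\Theta_p$, i.e.\ $|\delta| \leq \Theta_p$, and the equivalence with (3) becomes the existence of a feasible $\nu$ with $D(\nu, e\cdot c) \leq -|\delta|$. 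These are precisely the conditions listed in (2) and (3) under the hypothesis $\beta \notin (2\N_0)^n$.

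I do not expect a serious obstacle: the only subtle point is verifying that one really can realize both signs of $x^\beta$ independently of the (always non-negative) values $x^{\alpha(i)}$, which follows from the affine independence of the $\alpha(i) \in (2\N_0)^n$ together with $\beta \notin (2\N_0)^n$ (so that some $\beta_j$ is odd). Once this reduction is in place, the theorem drops out of Theorem~\ref{th:basicequivalence} without any further entropy computation, since Lemma~\ref{le:entropy1} has already been absorbed into that result.
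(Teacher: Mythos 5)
Your proposal is correct and follows essentially the same route as the paper: reduce non-negativity on $\R^n$ to non-negativity on $\R_+^n$ using that the $\alpha(i)$ are even and some $\beta_j$ is odd, then invoke Theorem~\ref{th:basicequivalence}. The only cosmetic difference is that you apply Theorem~\ref{th:basicequivalence} once to the polynomial with coefficient $-|\delta|$, whereas the paper applies it twice, to $p$ and to $p^- = \sum_{i=1}^k c_i x^{\alpha(i)} - \delta x^{\beta}$.
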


The equivalence of (1) and (2) was already shown by 
Iliman and de Wolff \cite{iliman-dewolff-resmathsci}.
Here, we deduce Theorem~\ref{th:basicequivalence2} as a consequence of 
Theorem~\ref{th:basicequivalence}.

\begin{proof}
If $\beta \in (2 \N_0)^n$, then the statement coincides with 
Theorem~\ref{th:basicequivalence}. If $\beta \not\in (2 \N_0)^n$, then
there exists at least one index $j$ such that $\beta_j$ is odd. Fix such an index $j$.
Since $\alpha(1), \ldots, \alpha(k)$ are even, $p$ is non-negative
on $\R^n$ if and only if $p$ is non-negative both on $\R_+^n$ and 
on the orthant 
$T := \{x \in \R^n \, : \, x_j \le 0, \; x_i \ge 0 \text{ for all } i \neq j\}$.
And this is equivalent to $p$ being non-negative on 
$\R_+^n \cup T$. Since $p$ is non-negative on $T$ if and only
if $p^- := \sum_{i=1}^k c_i x^{\alpha(i)} - \delta x^{\beta}$ is
non-negative on $\R_+^n$,
the equivalence of (1) and (2) (respectively of (1) and (3)) follows by 
applying the equivalence of (1) and (2) (respectively of (1) and (3))
in Theorem~\ref{th:basicequivalence} twice. 
\end{proof}

\begin{example}
Let $p=1+x^2y^4+x^4y^2+\delta x^2y^2$ with $\delta \in \R$. The circuit
number $\Theta_p$ of $p$ is 
\[
  \left(\frac{1}{1/3}\right)^{1/3} \cdot
  \left(\frac{1}{1/3}\right)^{1/3} \cdot
  \left(\frac{1}{1/3}\right)^{1/3} \ = \ 3 \, .
\]  
By Theorem~\ref{th:basicequivalence2},
$p$ is non-negative on $\R^n$ if and only if $\delta \ge -3$. In the case 
$\delta = -3$, the polynomial $p$ is recognized as a Motzkin
polynomial (see, e.g., \cite{reznick-concrete}). 
\end{example}

\subsection*{The cones SONC and SAGE}
 
 The SONC cone, as defined in the Introduction, is closely related to the SAGE cone (sums of arithmetic-geometric
 exponentials) introduced in \cite{chandrasekaran-shah-2016}. 
 Let $\mathcal{A}$ be a nonempty, finite subset of $\R^n$ (rather 
 than only of $\N_0^n$). For notational consistency with our definition of the 
 SONC cone, we provide here a definition of the SAGE cone in the language
 of \emph{real-exponent polynomials supported on $\mathcal{A}$},
 which are sums of the form 
     $\sum_{\alpha \in \mathcal{A}} c_{\alpha} x^{\alpha}$
     with real coefficients $c_{\alpha}$ and exponent vectors in $\mathcal{A}$.
 
  Let $Q_{n,\mathcal{A}}$ denote the set of real-exponent polynomials supported on
     $\mathcal{A}$, which 
     have at most one negative coefficient and which are non-negative on 
     $\R_{> 0}^n$.
     Then we define the SAGE cone $C_{\sage}(\mathcal{A})$
     as the set of finite sums of real-exponent polynomials in $Q_{n,\mathcal{A}}$.
     Note that, in particular, the circuit polynomials satisfying the conditions
     from Theorem~\ref{th:basicequivalence} are contained 
     in $C_{\sage}(\mathcal{A})$.
 
Using the duality pairing~\eqref{eq:pairing} from the Introduction, the following
description of the dual SAGE cone is known.
 
\begin{prop}\cite{chandrasekaran-shah-2016} \label{pr:dualsage}       
     The dual cone $(C_{\sage}(\mathcal{A}))^*$ is the set
		\begin{align*}
\left\{v\in\R_+^l\, : \, \exists \tau(j)\in\R^n, j=1,\ldots, l \text{ s.t. }v_i\log \frac{v_i}{v_j} \leq (\alpha(i)-\alpha(j))^T \tau(i) \, \forall i,j \right\},
	\end{align*}
where the settings
$0 \cdot \log \frac{0}{y} = 0$, $y \cdot \log \frac{y}{0} = \infty$ for $y > 0$
and $0 \cdot \log \frac{0}{0} = 0$ are used.
\end{prop}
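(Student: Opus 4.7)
The plan is to decompose $C_{\sage}(\mathcal{A})$ into a Minkowski sum of simpler ``single-negative-term'' cones and then to compute the dual of each summand by conic duality, with the Fenchel conjugacy between the relative entropy and the exponential supplying the inequalities in the claim.

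First, for each $\beta \in \mathcal{A}$ let $Q_{n,\mathcal{A}}^{(\beta)}$ denote the subcone of $Q_{n,\mathcal{A}}$ consisting of those real-exponent polynomials whose (possibly) negative coefficient sits at exponent $\beta$. Since every element of $Q_{n,\mathcal{A}}$ has at most one negative coefficient, one has the Minkowski decomposition $C_{\sage}(\mathcal{A}) = \sum_{\beta \in \mathcal{A}} Q_{n,\mathcal{A}}^{(\beta)}$, and hence, by the standard identity for duals of sums of closed convex cones,
\[
  (C_{\sage}(\mathcal{A}))^* \ = \ \bigcap_{\beta \in \mathcal{A}} (Q_{n,\mathcal{A}}^{(\beta)})^*.
\]
It therefore suffices to produce, for each $\beta$, the corresponding ``slice'' of the claimed description.

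Next one invokes the relative-entropy characterization of $Q_{n,\mathcal{A}}^{(\beta)}$: a polynomial $f = \sum_\alpha c_\alpha x^\alpha$ with $c_\alpha \ge 0$ for $\alpha \ne \beta$ lies in $Q_{n,\mathcal{A}}^{(\beta)}$ if and only if there exists $\nu \in \R_+^{\mathcal{A}\setminus\{\beta\}}$ satisfying the moment condition $\sum_{\alpha \ne \beta} \nu_\alpha (\alpha - \beta) = 0$ together with $D(\nu, e \cdot c_{-\beta}) \le c_\beta$. In the affinely independent case this reduces to Theorem~\ref{th:basicequivalence} after optimising out $\nu$ as in Lemma~\ref{le:entropy1}; in the general case the same arithmetic--geometric mean argument on $\R_{>0}^n$ applies. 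This exhibits $Q_{n,\mathcal{A}}^{(\beta)}$ as the linear projection of a relative-entropy-representable convex cone, so the computation of its dual becomes a support-function calculation. One then carries out this calculation by Lagrangian duality: for fixed $v$, the condition $v \in (Q_{n,\mathcal{A}}^{(\beta)})^*$ is that the infimum of $\sum_\alpha c_\alpha v_\alpha$ over all admissible pairs $(c,\nu)$ is non-negative. Introducing a multiplier $\tau \in \R^n$ for the moment equality and exploiting that the Fenchel conjugate of $(\nu,\lambda) \mapsto \nu \log(\nu/\lambda)$ is an exponential, one eliminates $\nu$ and the positive part of $c$ analytically. Feasibility of the inner minimisation forces $v_\alpha \ge 0$ for every $\alpha$, while the conjugacy converts the entropy bound into the pointwise inequalities $v_\alpha \log(v_\alpha / v_\beta) \le (\alpha - \beta)^T \tau$ for $\alpha \ne \beta$. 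Intersecting over $\beta$ and relabelling indices produces the description in the statement.

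The main obstacle is expected to be the strong-duality step in the Lagrangian computation: one needs a suitable constraint qualification, for example a Slater-type interior point for the relative-entropy cone, or an appeal to the closedness of the exponential cone, in order to ensure that no duality gap arises. Closely related is the careful handling of the boundary cases where some $v_\alpha$ or $\nu_\alpha$ vanishes; the closure conventions $0 \log(0/y) = 0$, $y \log(y/0) = \infty$ and $0 \log(0/0) = 0$ stated in the proposition are precisely those under which the resulting description remains closed in the limit, and they must be tracked throughout the dualisation.
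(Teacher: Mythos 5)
Your overall plan is the right one, and it is essentially the route of Chandrasekaran--Shah (the paper does not reprove Proposition~\ref{pr:dualsage}; it only cites it, but its own Lemma~\ref{le:dualcone4} carries out exactly this lift-and-dualize computation in the circuit case): decompose $C_{\sage}(\mathcal{A})$ into the AGE subcones $Q_{n,\mathcal{A}}^{(\beta)}$, pass to the relative-entropy lift, dualize with a multiplier $\tau$ for the moment condition, and intersect over $\beta$. The error is in the stated outcome of the dualization. You claim the dual of $Q_{n,\mathcal{A}}^{(\beta)}$ is cut out by $v_\alpha\log\frac{v_\alpha}{v_\beta}\le(\alpha-\beta)^T\tau$ for $\alpha\neq\beta$, i.e.\ with the \emph{outer} dual variables in front of the logarithm. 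Carrying out the inner minimization gives the transposed inequalities: after substituting the worst case $c_\beta=D(\nu,e\,c_{\setminus\beta})$, minimizing each term $c_\alpha v_\alpha+v_\beta\,\nu_\alpha\log\frac{\nu_\alpha}{e\,c_\alpha}$ over $c_\alpha>0$ yields $v_\beta\,\nu_\alpha\log\frac{v_\alpha}{v_\beta}$, and the subsequent LP duality in $\nu$ (against the constraint $\sum_{\alpha\neq\beta}\nu_\alpha(\alpha-\beta)=0$) produces one vector $\tau$ attached to the \emph{center} $\beta$ together with $v_\beta\log\frac{v_\beta}{v_\alpha}\le(\alpha-\beta)^T\tau$ for all $\alpha$. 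This is exactly the shape in Lemma~\ref{le:dualcone4}, where $v_0$, dual to the coefficient of $x^\beta$, sits in front of the log, and it is why the right-hand side in Proposition~\ref{pr:dualsage} carries $\tau(i)$, indexed by the variable appearing in the numerator.

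The discrepancy is not removed by ``intersecting over $\beta$ and relabelling'': your description groups the inequalities by the denominator index (one $\tau$ per $j$), the proposition groups them by the numerator index (one $\tau$ per $i$), and these are different cones. Concretely, take $n=1$ and exponents $\{0,1,2\}$; here $C_{\sage}$ equals the AGE cone with center $1$, whose dual is $\{v\ge 0: v_1^2\le v_0v_2\}$. For $v=(1,t,1)$ your block for $\beta=1$ reads $\log\frac{1}{t}\le-\tau$ and $\log\frac{1}{t}\le\tau$, which is satisfiable precisely when $t\ge 1$ (and the blocks for centers $0$ and $2$ are then also satisfiable), whereas membership in the dual requires $t\le 1$; for $t<1$ your set excludes a genuine dual vector. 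So the conjugacy step must be redone so that the inequalities come out with $v_\beta$ in front and $\tau$ attached to $\beta$; with that correction, the remaining ingredients of your plan (the decomposition $(C_1+C_2)^*=C_1^*\cap C_2^*$, the cited relative-entropy characterization of AGE membership, nonnegativity of $v$ from feasibility, a Slater/closedness argument for strong duality, and the closure conventions at vanishing coordinates) are the right ones and parallel the paper's proof of Theorem~\ref{th:dual-sonc}.
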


\section{The dual cone\label{se:dual-cone}}

We study the dual SONC cone $(C_{\sonc}(\mathcal{A}))^*$.
Let $\cl S$ be the topological closure of a set $S$. We show:

\begin{thm}\label{th:dual-sonc}
The dual cone $(C_{\sonc}(\mathcal{A}))^*$ is
\begin{eqnarray*}
  && \Big\{ (v_{\alpha})_{\alpha \in \mathcal{A}} \, \mid \, 
   v_{\alpha} \ge 0 \text { for } \alpha \in \mathcal{A} \cap (2 \N_0)^n \, \wedge \,
  \forall k \ge 2 \text{ and } (\alpha{(1)}, \ldots, \alpha{(k)},\beta) 
  \in I_k(\mathcal{A}) \, : \, \\
  && \qquad \exists v^* \ge |v_{\beta}| \; \, \exists \tau \in \R^n \text{ with } v^* \log \frac{v^*}{v_{\alpha{(j)}}} \le (\beta - \alpha{(j)})^T \tau, \, 1  \le j \le k
  \Big\} \, ,
\end{eqnarray*}
where we use the settings
$0 \cdot \log \frac{0}{y} = 0$, $y \cdot \log \frac{y}{0} = \infty$ for $y > 0$
and $0 \cdot \log \frac{0}{0} = 0$.
\end{thm}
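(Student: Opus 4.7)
The plan is to use Minkowski duality together with the relative-entropy form of non-negativity from Theorem~\ref{th:basicequivalence2}. Since $C_{\sonc}(\mathcal{A})$ is a Minkowski sum of convex cones, standard conic duality gives
\[
  (C_{\sonc}(\mathcal{A}))^* \;=\; \bigcap_{k=1}^{n+1}\;\bigcap_{A \in I_k(\mathcal{A})}\, (P_{n,A})^*.
\]
The $k=1$ tuples $A = (\alpha(1))$ force precisely the constraints $v_\alpha \ge 0$ for $\alpha \in \mathcal{A} \cap (2\N_0)^n$, because $P_{n,A}$ is then the ray $\R_+\cdot x^{\alpha(1)}$.

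For $k \ge 2$ and a fixed tuple $A = (\alpha(1),\ldots,\alpha(k),\beta) \in I_k(\mathcal{A})$, I would apply Theorem~\ref{th:basicequivalence2}: $p = \sum_i c_i x^{\alpha(i)} + \delta x^\beta$ lies in $P_{n,A}$ iff $c_i \ge 0$ for all $i$ together with $\delta \ge -\Theta_p$ (if $\beta \in (2\N_0)^n$, in which case $x^\beta \in P_{n,A}$ additionally enforces $v_\beta \ge 0$ in the dual) or $|\delta| \le \Theta_p$ (if $\beta \notin (2\N_0)^n$, with $v_\beta$ unrestricted in sign). In either parity case, minimizing the linear form $\sum_i c_i v_{\alpha(i)} + \delta v_\beta$ over the admissible $\delta$-interval reduces $v \in (P_{n,A})^*$ to the single uniform family of inequalities
\[
  \sum_{i=1}^k c_i\, v_{\alpha(i)} \;\ge\; v^* \cdot \Theta_p \qquad \text{for all } c \in \R_+^k,
\]
where $v^* := |v_\beta|$.

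Next I would use Lemma~\ref{le:entropy1} in the form $\Theta_p = -\min_\nu D(\nu, e\!\cdot\! c)$, the minimum taken over $\nu \in \R_+^k$ with $\sum_i \nu_i(\alpha(i)-\beta) = 0$, to rephrase the condition as
\[
  \sum_{i=1}^k c_i\, v_{\alpha(i)} \;+\; v^*\, D(\nu, e\!\cdot\! c) \;\ge\; 0
\]
for every $c \in \R_{>0}^k$ and admissible $\nu \in \R_+^k$. Optimizing over $c$ for fixed $\nu$ (the first-order condition gives $c_i = v^* \nu_i/v_{\alpha(i)}$, with the degenerate cases absorbed by the conventions $0\cdot\log(0/y) = 0$ and $y\cdot\log(y/0) = \infty$), a short substitution in the spirit of the proof of Lemma~\ref{le:entropy1} yields the minimum value $-\sum_i \nu_i\, v^* \log(v^*/v_{\alpha(i)})$. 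The dual condition on the tuple $A$ therefore reduces to the purely linear feasibility statement
\[
  \sum_{i=1}^k \nu_i\, v^* \log(v^*/v_{\alpha(i)}) \;\le\; 0 \qquad \text{for all } \nu \in \R_+^k \text{ with } \textstyle\sum_i \nu_i(\alpha(i)-\beta) = 0,
\]
and Farkas' lemma (applied to the linear moment equation) produces a multiplier $\tau \in \R^n$ satisfying $v^* \log(v^*/v_{\alpha(i)}) \le (\beta - \alpha(i))^T \tau$ for all $i$. Since the set $\{v^* \ge 0 : \exists \tau \text{ with these inequalities}\}$ is jointly convex in $(v^*,\tau)$ and contains $v^* = 0$, it is an interval, so the formulation "$\exists\, v^* \ge |v_\beta|$" from the theorem is equivalent to validity at $v^* = |v_\beta|$, which is what the derivation produces.

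The main obstacle I foresee is the Farkas / convex-duality step at the boundary of the positive orthant, where the extended-real conventions $0\cdot\log(0/y) = 0$ and $y\cdot\log(y/0) = \infty$ become unavoidable: when some $v_{\alpha(i)}$ vanishes, the optimization over $c$ above degenerates, and the inequalities must be interpreted via a limiting / closure argument (using that $(C_{\sonc}(\mathcal{A}))^*$ is automatically closed). Running both parity cases for $\beta$ through the single parameter $v^* = |v_\beta|$ without losing any inequalities, and confirming that the resulting existential formulation genuinely captures them, is the other point that will require careful bookkeeping.
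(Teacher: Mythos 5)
Your proposal arrives at the correct description and is sound in its main steps, but it reaches the single-circuit dual by a genuinely different route than the paper. The paper isolates the case of one circuit in Lemma~\ref{le:dualcone4} and computes its dual with conic machinery: Proposition~\ref{pr:fact1} identifies the relative entropy condition with membership in the exponential cone $K_{\exp}$, whose known dual (Lemma~\ref{le:dualcone1}, then the multivariate Lemma~\ref{le:dualcone3}) is combined with the rules $(C_1\cap C_2)^*=C_1^*+C_2^*$ and $(\sum_i D_i)^*=\bigcap_i D_i^*$; the barycentric constraint $\sum_j \nu_j\alpha(j)=(\mathbf{1}^T\nu)\beta$ is dualized to a spanned subspace, and the parameter $\tau$ appears by projecting the lifted dual onto the $(v,v_0)$-coordinates. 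You instead dualize the coefficient description directly: via Theorem~\ref{th:basicequivalence2} you reduce $(P_{n,A})^*$ to the inequalities $\sum_i c_i v_{\alpha(i)}\ge |v_\beta|\,\Theta_p$ for all $c\in\R_+^k$, trade $\Theta_p$ for $-\min_\nu D(\nu,e\cdot c)$ (this value computation is in the proof of Theorem~\ref{th:basicequivalence}, resting on Lemma~\ref{le:entropy1}, not in Lemma~\ref{le:entropy1} itself), perform the inner minimization over $c$ by hand ($c_i=v^*\nu_i/v_{\alpha(i)}$ is correct), and produce $\tau$ by LP duality (Farkas) on the cone $\{\nu\ge 0:\sum_i\nu_i(\alpha(i)-\beta)=0\}$; your Farkas multiplier plays exactly the role of the paper's projection step, and your convexity remark correctly reconciles evaluation at $v^*=|v_\beta|$ with the existential phrasing $\exists\, v^*\ge|v_\beta|$ in the theorem. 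What your route buys is elementarity and transparency (no imported formula for $(K_{\exp})^*$, and a clear reason why $\tau$ exists); what the paper's route buys is that all closure and degeneracy issues are packaged once and for all inside the closed cones handled in Lemmas~\ref{le:dualcone1}--\ref{le:dualcone4}.

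The one place where your sketch is thinner than the rest is the boundary bookkeeping you flag yourself: since the set in the theorem is not presented as a closure, closedness of $(C_{\sonc}(\mathcal{A}))^*$ alone does not finish the argument; you must check directly that the stated conventions match the dual on the boundary of the orthant. Concretely, if $v_{\alpha(j)}=0$ for some $j$ while $v_\beta\neq 0$, exhibit a violating non-negative circuit polynomial by letting $c_j\to\infty$ (so $\Theta_p\to\infty$) and choosing $\delta=\mp\Theta_p$ according to the sign of $v_\beta$; and if $v_\beta=0$, verify that the circuit imposes nothing beyond $v_{\alpha(j)}\ge 0$. These verifications are short and of the same nature as what the paper disposes of via its closures, so I regard them as bookkeeping rather than a gap, but they should be written out.
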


We immediately obtain the following corollary for the dual of the cone 
of non-negative
polynomials of total degree at most $d$, where we set
$\mathcal{A}_d = \{ \alpha \in \N_0^n \, : \, |\alpha| \le d\}$
and $C_{\sonc}(d) = C_{\sonc}(\mathcal{A}_d)$.

\begin{cor}\label{th:dual-sonc2}
For any number of variables $n$ and any even $d \ge 2$,
the dual cone $(C_{\sonc}(d))^*$ is
\[
  \begin{array}{l}
  \displaystyle
  \Big\{ (v_{\alpha})_{|\alpha| \le d} \, \mid \, 
   v_{\alpha} \ge 0 \text { for } \alpha \in \mathcal{A}_d  \cap (2 \N_0)^n \, \wedge \,
  \forall k \ge 2 \text{ and } (\alpha{(1)}, \ldots, \alpha{(k)},\beta) 
  \in I_k(\mathcal{A}_d) \, : \, \\ [1ex]
  \displaystyle
  \qquad \exists v^* \ge |v_{\beta}| \; \, \exists \tau \in \R^n \text{ with } v^* \log \frac{v^*}{v_{\alpha{(j)}}} \le (\beta - \alpha{(j)})^T \tau, \, 1  \le j \le k
  \Big\} \, .
  \end{array}
\]
\end{cor}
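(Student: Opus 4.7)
The plan is to obtain the corollary as an immediate specialization of Theorem~\ref{th:dual-sonc} to the support set $\mathcal{A} = \mathcal{A}_d$. By definition, $C_{\sonc}(d) = C_{\sonc}(\mathcal{A}_d)$, so the two dual cones coincide and the claimed description is exactly the right-hand side of Theorem~\ref{th:dual-sonc} with $\mathcal{A}$ replaced by $\mathcal{A}_d$. I would first note that the duality pairing $v(f) = \sum_{|\alpha| \le d} c_\alpha v_\alpha$ on $\R^{\mathcal{A}_d}$ is precisely the pairing~\eqref{eq:pairing} for the index set $\mathcal{A}_d$, so membership of $v$ in $(C_{\sonc}(d))^*$ is governed by the same nonnegativity condition on the even exponents in $\mathcal{A}_d$ and the same relative-entropy-type inequalities, now quantified over $k \ge 2$ and tuples in $I_k(\mathcal{A}_d)$.

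Since the corollary is a direct substitution, no substantive work remains beyond checking that all ingredients are well-defined. The only point worth flagging is the role of the hypothesis that $d \ge 2$ is even: it ensures that $\mathcal{A}_d \cap (2\N_0)^n$ is rich enough to furnish the outer vertices $\alpha(1),\ldots,\alpha(k)$ of the circuit tuples appearing in $I_k(\mathcal{A}_d)$ (in particular, $\mathbf{0}$ and all $2 e_i$ lie in $\mathcal{A}_d$, so the family of admissible circuits is nontrivial and the existential condition involving $\tau \in \R^n$ is meaningful). With that verified, I would close the proof in one sentence by invoking Theorem~\ref{th:dual-sonc}. The main ``obstacle'', which is not really one here, is simply to recognize that the statement contains no content beyond the theorem it specializes.
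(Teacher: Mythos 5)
Your proposal is correct and coincides with the paper's treatment: the paper gives no separate proof, stating only that the corollary is obtained immediately from Theorem~\ref{th:dual-sonc} by specializing $\mathcal{A}$ to $\mathcal{A}_d$ with $C_{\sonc}(d)=C_{\sonc}(\mathcal{A}_d)$. Your side remark on the evenness of $d$ is harmless but not needed for the substitution (even for odd $d$ the set $\mathcal{A}_d$ contains even lattice points); the hypothesis merely reflects the usual convention for degree-$d$ nonnegativity.
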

 
A main step in the proof of Theorem~\ref{th:dual-sonc}
is to capture the case of a single circuit, this is the
content of Lemma~\ref{le:dualcone4} below.
The remaining part of the proof of Theorem~\ref{th:dual-sonc}
will then follow from elementary convex geometry.
Let us also point out that the main difference of the proof of
Theorem~\ref{th:dual-sonc} in comparison to 
Theorem~\ref{pr:dualsage} does not only
come from the circuits (which are not present in the definition 
of the SAGE cone), but also from the non-negativity on the ground set $\R^n$
(consisting of $2^n$ orthants) rather than only on the orthant $\R_{>0}^n$,
which leads to the more involved statement.

To prepare the proof of the circuit case,
we start from the well-known exponential cone (see, e.g.,
\cite[{\S}6.3.4]{boyd-vandenberghe-book}). Setting
\[
  K = \{(x,y,z) \in \R^3 \, : \, y \cdot e^{x/y} \le z, \; y > 0\} ,
\]
the \emph{exponential cone} is defined as
\[
  K_{\exp} \ = \  \cl K \ = \ K \cup \ (\R_- \times \{0\} \times \R_+).
\]
$K_{\exp}$ is a closed convex cone with nonempty interior.

The following characterization for the relative entropy function $D$ is
well known, and it shows that the \emph{relative entropy cone} 
$\cl \{(\nu, \lambda,\delta) \in \R_{>0} \times \R_{>0} \times \R \ : \ 
  D(\nu,\lambda) \le \delta\}$
can be viewed as a reparametrization of the exponential cone 
(see, e.g., \cite{chandrasekaran-shah-rel-entropy}). 

\begin{prop}\label{pr:fact1}
For $\nu, \lambda > 0$ and $\delta \in \R$ we have
	 $D(\nu , \lambda) \le \delta$ if and only if
$(-\delta, \nu, \lambda) \in K_{\exp}$.
\end{prop}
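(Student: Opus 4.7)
The statement is essentially a change of variables between the exponential cone and the relative entropy cone, so my plan is simply to unravel the definitions and verify a short monotone equivalence.

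First, I would eliminate the closure piece of $K_{\exp}$. By hypothesis $\nu > 0$, so the triple $(-\delta, \nu, \lambda)$ cannot lie in $\R_- \times \{0\} \times \R_+$, as the middle coordinate there is forced to be zero. Consequently, $(-\delta, \nu, \lambda) \in K_{\exp}$ if and only if $(-\delta, \nu, \lambda) \in K$, which by the definition of $K$ amounts to the inequality $\nu \cdot e^{-\delta/\nu} \le \lambda$ (with the side condition $\nu > 0$, which we already have).

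Second, I would translate this inequality into the relative entropy form $D(\nu,\lambda) \le \delta$. Since both $\nu > 0$ and $\lambda > 0$, both sides of $\nu e^{-\delta/\nu} \le \lambda$ are strictly positive. Dividing by $\nu$ and taking the logarithm (each a strictly monotone operation on $\R_{>0}$) yields the equivalent inequality $-\delta/\nu \le \log(\lambda/\nu)$. Multiplying through by $-\nu < 0$ reverses the inequality and gives $\nu \log(\nu/\lambda) \le \delta$, which is exactly $D(\nu, \lambda) \le \delta$ in the scalar setting.

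No significant obstacle arises: the proposition is little more than the observation that, up to the sign convention $x = -\delta$, the exponential cone $K$ is the epigraph of the scalar relative entropy function $(\nu, \lambda) \mapsto \nu \log(\nu/\lambda)$ on $\R_{>0} \times \R_{>0}$. The only points requiring care are the direction of the inequality when multiplying by the negative quantity $-\nu$, and the use of $\nu > 0$ to rule out the boundary piece of $K_{\exp}$ at the very start.
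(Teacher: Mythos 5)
Your proof is correct and follows essentially the same route as the paper: both reduce the cone membership to the scalar inequality $\nu e^{-\delta/\nu} \le \lambda$ and pass between it and $\nu\log(\nu/\lambda)\le\delta$ by monotone operations (you argue from the cone to the entropy inequality, the paper in the opposite direction). Your explicit remark that $\nu>0$ rules out the boundary piece $\R_-\times\{0\}\times\R_+$ of $K_{\exp}$ is a small point the paper leaves implicit, but otherwise the arguments coincide.
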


For the sake of completeness, we provide a short proof.

\begin{proof}
By definition of the entropy function,
$D(\nu, \lambda) \le \delta$ if and only if
$\nu \log \frac{\nu}{\lambda} \le \delta$. Applying the
exponential function on both sides and 
taking the $\nu$-th root on both sides gives
\[
  \frac{\nu}{\lambda} \ \le \ (e^{\delta})^{1 / \nu} \ 
    \ = \ \exp \left( \frac{\delta}{\nu} \right).
\]
This is equivalent to
$\nu \exp ( - \delta/\nu) \ \le \ \lambda$,
i.e., to $(-\delta, \nu, \lambda) \in K_{\exp}$.
\end{proof}

The dual of the exponential cone is
\begin{eqnarray}
  (K_{\exp})^* & = & \cl \{(a,b,c) \in \R_{<0} \times \R \times \R_{+} , \;
    c \ge -a \cdot e^{b/a-1} \} \, \label{eq:k-exp-dual} \\
  & = & \{(a,b,c) \in \R_{<0} \times \R \times \R_{+} , \;
    c \ge -a \cdot e^{b/a-1} \} \cup 
    (\{0\} \times \R_+ \times \R_+) \nonumber
\end{eqnarray}
(see, e.g., \cite[Theorem 4.3.3]{chares-phd}).

\begin{lemma}\label{le:dualcone1}
$(1)$ The dual cone of 
\[
\begin{array}{rcl}
  C & = & \cl \{ (\nu, c ,\delta) \in \R_{+} \times \R_{>0} \times \R 
\, : \, D(\nu, e c) \le \delta\} \\ [1ex]
   & = & \{ (\nu, c ,\delta) \in \R_{+} \times \R_{>0} \times \R
\, : \, D(\nu, e c) \le \delta\} \cup ( \{0\} \times \R_+ \times \R_+)
\end{array}
\]
is the convex cone
\[
  C^* \ = \ \cl \Big\{ (r,s,t)\in \R \times \R_{>0} \times \R_{>0} \, : \, 
    t \log \frac{t}{s} \le r \Big\}.
\]

$(2)$ The dual cone of 
$\cl \left\{ (\nu, c,\delta) \in \R_{+} \times \R_{>0} \times \R 
\, : \, D(\nu, e c) \le - |\delta|\right\}
$
is the convex cone
\begin{equation}
  \label{eq:cone2}
  \cl \Big\{ (r,s,t)\in \R \times \R_{>0} \times \R_{\neq 0} \, : \, 
    \exists t^* \ge |t| \text{ with } t^* \log \frac{t^*}{s} \le r \Big\} \, .
\end{equation}

\end{lemma}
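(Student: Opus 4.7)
My plan for Part~(1) is to identify $C$ with a linear image of the exponential cone $K_{\exp}$ and then apply the formula \eqref{eq:k-exp-dual} for $(K_{\exp})^*$. By Proposition~\ref{pr:fact1}, $D(\nu,ec)\le\delta$ is equivalent to $(-\delta,\nu,ec)\in K_{\exp}$, so if $A$ denotes the invertible matrix of the linear map $(\nu,c,\delta)\mapsto(-\delta,\nu,ec)$ (which has $\det A=-e$), then $C=A^{-1}(K_{\exp})$. The standard identity $(A^{-1}K)^{*}=A^{T}K^{*}$ for invertible $A$ gives $C^{*}=A^{T}(K_{\exp})^{*}$. Substituting the description of $(K_{\exp})^{*}$, a dual vector $(a,b,c)$ with $a<0$, $c\ge 0$, $c\ge -a\cdot e^{b/a-1}$ maps under $A^{T}$ to $(b,ec,-a)$. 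The change of variables $r=b$, $s=ec$, $t=-a$ turns the exponential inequality into $t\log(t/s)\le r$ (for $s,t>0$), while the boundary piece $\{0\}\times\R_{+}\times\R_{+}$ of $(K_{\exp})^{*}$ becomes $\{(r,s,0):r,s\ge 0\}$; together these are precisely the points of $\cl\{(r,s,t):s,t>0,\ t\log(t/s)\le r\}$.

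For Part~(2), the key observation is that $D(\nu,ec)\le-|\delta|$ is equivalent to the conjunction of $D(\nu,ec)\le\delta$ and $D(\nu,ec)\le-\delta$. Writing $C$ for the cone of Part~(1) and $C'$ for its image under the sign-flip $(\nu,c,\delta)\mapsto(\nu,c,-\delta)$, the cone of Part~(2) thus equals $C\cap C'$. Since $C$ and $C'$ are closed convex cones, duality of intersection gives $(C\cap C')^{*}=\cl(C^{*}+C'^{*})$. By Part~(1) and the sign-flip, $C^{*}$ consists of triples $(r,s,t)$ with $t\ge 0$ satisfying $t\log(t/s)\le r$, while $C'^{*}$ consists of triples $(r,s,t)$ with $t\le 0$ satisfying $-t\log(-t/s)\le r$. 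To match $\cl(C^{*}+C'^{*})$ with \eqref{eq:cone2}: given a sum $(r_{1},s_{1},t_{1})+(r_{2},s_{2},t_{2})$ with $t_{1}\ge 0$, $t_{2}\le 0$, setting $t^{*}:=|t_{1}|+|t_{2}|=t_{1}-t_{2}\ge|t_{1}+t_{2}|$, the log-sum inequality
\[
|t_{1}|\log\tfrac{|t_{1}|}{s_{1}}+|t_{2}|\log\tfrac{|t_{2}|}{s_{2}}\ \ge\ t^{*}\log\tfrac{t^{*}}{s_{1}+s_{2}}
\]
yields $r_{1}+r_{2}\ge t^{*}\log(t^{*}/s)$. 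Conversely, given $(r,s,t)$ with $s>0$ and some $t^{*}\ge|t|$ satisfying $t^{*}\log(t^{*}/s)\le r$, set $t_{1}=(t^{*}+t)/2\ge 0$, $t_{2}=(t-t^{*})/2\le 0$ and distribute $s_{i}=s|t_{i}|/t^{*}$, $r_{i}=r|t_{i}|/t^{*}$; then $|t_{i}|/s_{i}=t^{*}/s$, so $|t_{i}|\log(|t_{i}|/s_{i})=|t_{i}|\log(t^{*}/s)\le r_{i}$, whence $(r_{1},s_{1},t_{1})\in C^{*}$ and $(r_{2},s_{2},t_{2})\in C'^{*}$.

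The main technical obstacle I expect is the careful bookkeeping of closures and degenerate cases in Part~(2), especially when $t^{*}=0$ (forcing $t=0$), when one of the $t_{i}$ vanishes, or when approaching $s=0$. These are handled by allowing one summand to come from the boundary piece $\{(r',0,0):r'\ge 0\}$ of $\cl C^{*}$ (or $\cl C'^{*}$) and absorbing the remaining limit points into the outer closure of $\cl(C^{*}+C'^{*})$.
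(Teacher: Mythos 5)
Your proposal is correct and takes essentially the same route as the paper: part (1) is the reparametrization of the dual exponential cone via Proposition~\ref{pr:fact1} and \eqref{eq:k-exp-dual} (your use of $(A^{-1}K)^{*}=A^{T}K^{*}$ is just a named version of the paper's direct substitution), and part (2) dualizes the intersection of $C$ with its $\delta\mapsto-\delta$ reflection into a Minkowski sum. Your explicit verification that this Minkowski sum equals \eqref{eq:cone2} -- the log-sum inequality for one inclusion and the proportional splitting $t_{1}=(t^{*}+t)/2$, $t_{2}=(t-t^{*})/2$, $s_{i}=s|t_{i}|/t^{*}$ for the other -- fills in details the paper's proof only asserts, but it is the same argument in substance.
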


\begin{proof}By Proposition~\ref{pr:fact1}, we have
$C \ = \ \{(\nu,c,\delta) \, : \, (-\delta, \nu, e c) \in K_{\exp} \}$.
Hence, by~\eqref{eq:k-exp-dual}, the dual cone is
\begin{eqnarray*}
  C^* & = & \left\{ (r,s,t) \, : \, \left(-t,r,\frac{s}{e}\right) \in K^*_{\exp} \right\} \\
      & = & \cl \left\{ (r,s,t) \in \R \times \R_{>0} \times \R_{>0} \, : \, \frac{s}{e} \ge t e^{\frac{r}{-t}-1} \right\} .
\end{eqnarray*}
Since $\frac{s}{e} \ge t e^{\frac{r}{-t}-1}$ is equivalent to
$\log \frac{s}{t} \ge \frac{r}{-t}$,
and thus equivalent to $t \log \frac{t}{s} \le r$,
the statement $(1)$ follows.

Applying $(1)$ with respect to 
$-\delta$ rather than $\delta$ gives the auxiliary dual cone
\begin{equation}
  \label{eq:auxcone}
  (C_{-})^* \ = \ \cl \Big\{ (r,s,t)\in \R \times \R_{>0} \times \R_{<0} \, : \, 
    (-t) \log \frac{-t}{s} \le r \Big\} \, .
\end{equation}
Using the general formula 
$(C_1 \cap C_2)^* = C_1^* + C_2^*$ for two closed convex cones $C_1$
and $C_2$ (see, e.g., \cite{schneider-book}), it then remains to show that
the Minkowski sum of $C^*$ and $(C_{-})^*$ equals \eqref{eq:cone2}. Since 
$C^* := \{(r_1,s_1,t_1) \, : \, (r_1,s_1,-t_1) \in (C_{-})^*\}$, the 
Minkowski sum $C^* + (C_{-})^*$ consists of the closure of all the points
$(r,s,t) \in \R \times \R_{>0} \times \R$ such that
\[
\begin{array}{rcl}
 & & (t > 0 \text{ and } \exists t_1 \ge t \text{ with } t_1 \log\frac{t_1}{s} \le r) \\ [1ex]
 & \text{ or } & (t < 0 \text{ and } \exists t_2 \le t \text{ with } (-t_2) \log\frac{-t_2}{s} \le r).
\end{array}
\]
This gives the desired dual in~\eqref{eq:cone2}.
\end{proof}

We obtain the following multivariate version:

\begin{lemma}$(1)$ \label{le:dualcone3}The dual cone of 
$\cl \{ (\nu, c,\delta) \in \R_{+}^n \times \R_{>0}^n \times \R \, : \, D(\nu, e c) \le \delta\}$ is
\[
\cl \Big\{ (r,s,t) \in \R^n \times \R_{>0}^n \times \R_{>0} \, : \, 
  t \log \frac{t}{s_j} \le r_j \text{ for } 1 \le j \le n \Big\}.
\] 
$(2)$ The dual cone of 
$\cl \{ (\nu, c,\delta) \in \R_{+}^n \times \R_{>0}^n \times \R \, : \, D(\nu, e c) \le 
     - |\delta|\}$
is
\[
  \cl \Big\{ (r,s,t) \in \R^n \times \R_{>0}^n \times \R_{\neq  0} \, : \, 
  \exists t^* \ge |t| \text{ with } t^* \log \frac{t^*}{s_j} \le r_j \text{ for } 1 \le j \le n \Big\}.
\]
\end{lemma}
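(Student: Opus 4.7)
The plan is to reduce both statements to their one-variable counterparts in Lemma~\ref{le:dualcone1} by exploiting the additive structure of the entropy function $D(\nu, ec) = \sum_{j=1}^n \nu_j \log(\nu_j / (e c_j))$.

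For part~(1), I would write the primal cone as a linear image of a product of univariate cones. Namely, for $(\nu,c,\delta) \in \R_+^n \times \R_{>0}^n \times \R$, the inequality $D(\nu, ec) \le \delta$ holds if and only if there exist $\delta_1, \ldots, \delta_n$ with $\sum_j \delta_j = \delta$ and $\nu_j \log(\nu_j / (e c_j)) \le \delta_j$ for every $j$. Setting
\[
  C_j \;=\; \cl\bigl\{(\nu_j, c_j, \delta_j) \in \R_+ \times \R_{>0} \times \R \,:\, \nu_j \log(\nu_j/(ec_j)) \le \delta_j \bigr\}
\]
and $L(\nu, c, (\delta_j)_j) = (\nu, c, \sum_j \delta_j)$ (after an obvious reordering of coordinates), the primal cone equals $\cl L\bigl(\prod_{j=1}^n C_j\bigr)$. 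Using the standard identity $L(K)^* = (L^*)^{-1}(K^*)$ together with the fact that the adjoint $L^*$ sends $(r,s,t)$ to $(r,s,(t,\ldots,t))$ and that $\bigl(\prod_j C_j\bigr)^* = \prod_j C_j^*$, membership of $(r,s,t)$ in the dual reduces to $(r_j, s_j, t) \in C_j^*$ for each $j$. Lemma~\ref{le:dualcone1}(1) identifies each of these conditions as $t\log(t/s_j) \le r_j$ in the appropriate closure, yielding the claimed description.

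For part~(2), observe that $D(\nu, ec) \le -|\delta|$ is equivalent to the simultaneous validity of $D(\nu, ec) \le \delta$ and $D(\nu, ec) \le -\delta$. Thus the primal cone is the intersection of the cone from part~(1) with its reflection in the $\delta$-coordinate. Applying the duality relation $(C_1 \cap C_2)^* = \cl(C_1^* + C_2^*)$ exactly as in the proof of Lemma~\ref{le:dualcone1}(2), the dual becomes the closure of the Minkowski sum of the part~(1) dual with its reflection in the $t$-coordinate. A decomposition argument identical to the univariate one shows that a point lies in this sum precisely when there is some $t^* \ge |t|$ with $t^* \log(t^*/s_j) \le r_j$ for every $j$, which is the stated cone.

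The main delicate point is the careful handling of the closures: one has to check that taking the closure of $L(\prod_j C_j)$ and then dualizing interacts correctly with the coordinatewise extension conventions $0 \cdot \log(0/y) = 0$, $y \cdot \log(y/0) = \infty$ and $0 \cdot \log(0/0) = 0$, so that boundary points where some $s_j$ vanishes are treated consistently in the primal decomposition and in the dual preimage under $L^*$. The Minkowski-sum step in part~(2) inherits the same closure subtlety from the univariate case. None of these issues is deep, but all of them require some bookkeeping once several closures are composed.
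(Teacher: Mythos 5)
Your proposal is correct and takes essentially the same route as the paper: your ``linear image of a product'' formulation of part (1) is exactly the paper's decomposition of the primal cone as a Minkowski sum of coordinate-embedded copies of the univariate cone from Lemma~\ref{le:dualcone1}(1) together with the rule $(\sum_i D_i)^* = \bigcap_i D_i^*$, which is the same computation as your $(L^*)^{-1}\bigl(\prod_j C_j^*\bigr)$. For part (2) you spell out precisely what the paper's one-line instruction ``combine the first statement with Lemma~\ref{le:dualcone1}(2)'' means (intersect with the $\delta$-reflection, dualize to a Minkowski sum, and repeat the univariate sum computation), so there is no substantive difference.
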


\begin{proof}The cone 
$C = \cl \{ (\nu, c,\delta) \, : \, D(\nu, e c) \le \delta\}$ can be interpreted as
a Minkowski sum $\sum_{i=1}^n C_i$, where 
$C_i \subseteq \R_{+}^n \times \R_{+}^n \times \R$ 
is given by embedding
$\{ (\nu_i, c_i ,\delta) \, : \, D(\nu_i, e c_i) \le \delta\}$ 
into the corresponding coordinates of $(\nu,c,\delta)$, that is,
\[
  C_i \ = \ \cl \Big\{ ( \nu_i e^{(i)}, c_i e^{(i)}, \delta) \in \R_{+}^n \times \R_{>0}^n \times \R \ : \
    \nu_i \log \left( \frac{\nu_i}{e c_i} \right) \le \delta \Big\} ,
\]
where $e^{(i)}$ is the $i$-th unit vector.
Hence, $C_i^*$ is known from Lemma~\ref{le:dualcone1}$(1)$, and using
$(\sum_{i=1}^n D_i)^*$ $= \bigcap_{i=1}^n D_i^*$ for any
closed convex cones $D_i$ (see \cite{schneider-book})
proves the first statement.

For the second statement, combine the first statement with
Lemma~\ref{le:dualcone1}$(2)$.
\end{proof}

For affinely independent $\alpha(1), \ldots, \alpha(k) \in (2\N_0)^n$ and
$\beta \in \relinter(\conv\{\alpha(1), \ldots, \alpha(k)\}) \cap \N_0^n$, denote by
\[
  C_{nc}(\alpha(1), \ldots, \alpha(k), \beta) \ = \
  \Big\{ (c_1, \ldots, c_k, \delta) \in \R_{+}^{k} \times \R \, : \, \sum_{i=1}^k c_i x^{\alpha(i)} + \delta x^{\beta}
   \text{ $\ge 0$ on } 
  \R^n \Big\}
\]
the cone of non-negative circuit polynomials with support contained in
$(\alpha(1), \ldots, \alpha(k), \beta)$.

\begin{lemma}\label{le:dualcone4}
Let $k \ge 2$, $\alpha(1), \ldots, \alpha(k) \in (2\N_0)^n$ be affinely 
independent and $\beta \in \relinter(\conv\{\alpha(1), $ $\ldots, \alpha(k) \}) \cap \N_0^n$.
The dual cone of
  $C_\nc(\alpha{(1)}, \ldots, \alpha{(k)}, \beta)$
is 
\[
  \left\{ \hspace*{-1.5ex}
  \begin{array}{ll}
  \begin{array}{l}
  \cl \big\{(v,v_0) \in \R^{k}_{>0} \times \R_{>0} \, : \,
   \exists \tau \in \R^n \text{ with }
   v_0 \log \frac{v_0}{v_j} \le (\beta - \alpha{(j)})^T \tau \; \forall j \big\} 
  \end{array} &
     \text{if $\beta \in (2\N_0)^n$}, 
  \\ [1.5ex]
   \begin{array}{l}\cl \big\{(v,v_0) \in \R^{k}_{>0} \times \R_{\neq 0} \, : \,
   \exists v_0^* \ge |v_0| \; \, \exists \tau \in \R^n \text{ with } \\ [0.5ex]
   \quad \quad v_0^* \log \frac{v_0^*}{v_j} \le (\beta - \alpha{(j)})^T \tau \; \forall j \big\}
   \end{array} &
    \text{if $\beta \not\in (2 \N_0)^n$}.
   \end{array}
   \right.
\]
\end{lemma}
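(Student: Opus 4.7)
The plan is to realize $C_{\nc}(\alpha(1),\ldots,\alpha(k),\beta)$ as the linear image of the intersection of a higher-dimensional entropy cone with a linear subspace, and then dualize using standard convex-cone calculus together with Lemma~\ref{le:dualcone3}. Concretely, let
\[
  E \;=\; \cl\bigl\{(\nu,c,\delta)\in\R_+^k\times\R_{>0}^k\times\R \,:\, D(\nu,ec)\le\delta\bigr\}
\]
in the case $\beta\in(2\N_0)^n$, and replace the right-hand side by $-|\delta|$ in the case $\beta\notin(2\N_0)^n$. Let $H$ denote the linear subspace cut out by the moment-matching condition $\sum_{i=1}^{k}(\alpha(i)-\beta)\nu_i=0$ (which is equivalent to $\sum_i\alpha(i)\nu_i=(\mathbf{1}^T\nu)\beta$), and let $\pi:(\nu,c,\delta)\mapsto(c,\delta)$ be the projection forgetting the $\nu$-coordinates. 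The equivalence of (1) and (3) in Theorem~\ref{th:basicequivalence2}, together with a limiting argument covering $c_i=0$, yields $C_{\nc}(\alpha(1),\ldots,\alpha(k),\beta)=\pi(E\cap H)$.

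Next, I would apply the standard duality $(E\cap H)^*=\cl(E^{*}+H^{\perp})$ for closed convex cones. The cone $E^{*}$ is furnished by Lemma~\ref{le:dualcone3}(1), respectively (2): its elements are triples $(\mu^{*},v,v_0)$ with $v\in\R_{>0}^k$ for which $v_0\log\frac{v_0}{v_j}\le \mu^{*}_j$ for all $j$ (respectively, for which some $v_0^{*}\ge|v_0|$ satisfies $v_0^{*}\log\frac{v_0^{*}}{v_j}\le \mu^{*}_j$). The orthogonal complement $H^{\perp}$ is spanned by the $n$ vectors
\[
  \bigl(\alpha(1)_l-\beta_l,\,\ldots,\,\alpha(k)_l-\beta_l,\;0,\ldots,0\bigr),\qquad l=1,\ldots,n,
\]
so a general element of $H^{\perp}$ takes the form $(\xi,0,0)$ with $\xi_j=(\alpha(j)-\beta)^T\tau$ for some $\tau\in\R^n$.

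Dualizing the projection gives $(v,v_0)\in C_{\nc}^{*}$ if and only if $(0,v,v_0)\in(E\cap H)^{*}$. Writing $(0,v,v_0)=(\mu^{*},v,v_0)+(-\mu^{*},0,0)$ with the first summand in $E^{*}$ and the second in $H^{\perp}$ forces $\mu^{*}_j=(\beta-\alpha(j))^T\tau$ for some $\tau\in\R^n$; substituting this into the inequalities describing $E^{*}$ produces $v_0\log\frac{v_0}{v_j}\le(\beta-\alpha(j))^T\tau$ in the even case and the analogous statement with an auxiliary $v_0^{*}\ge|v_0|$ in the odd case. Passing to the closure on both sides matches the closure in the target description.

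The main technical obstacle is handling the closures and boundary strata: first, showing $C_{\nc}=\pi(E\cap H)$ on the locus where some $c_i$ vanishes (where Theorem~\ref{th:basicequivalence2} does not literally apply to a genuine circuit polynomial) via continuity in $c$; and second, verifying that the duality identity $(E\cap H)^{*}=\cl(E^{*}+H^{\perp})$ is invoked correctly, since $E^{*}+H^{\perp}$ need not be closed. Both issues are resolved cleanly by keeping the closure operation in place throughout, which is consistent with the closures already appearing in Lemma~\ref{le:dualcone3} and in the statement of the lemma.
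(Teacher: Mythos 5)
Your proposal is correct and follows essentially the same route as the paper: lift $C_{\nc}$ to the entropy cone intersected with the moment-matching subspace $H$ (the paper's $\widehat{C_\nc}$ is exactly your $E\cap H$), dualize via Lemma~\ref{le:dualcone3} and $(C_1\cap C_2)^*=\cl(C_1^*+C_2^*)$ with $H^*=H^{\perp}$, and recover $(C_{\nc})^*$ by restricting to the slice where the $w$-block vanishes, forcing $w_j=(\beta-\alpha(j))^T\tau$. If anything, you are slightly more explicit than the paper about the adjoint/preimage step ($(v,v_0)\in C_{\nc}^*$ iff $(0,v,v_0)\in(E\cap H)^*$) and about the closure issues at $c_i=0$ and in the Minkowski sum, which the paper treats more briefly.
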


\begin{proof}First assume $\beta \in (2 \N_0)^n$ and consider
the lifted version
\[
  \widehat{C_\nc}(\alpha{(1)}, \ldots, \alpha{(k)}, \beta)
  := \cl \Big\{(\nu,c,\delta) \in \R^{k}_{+} \times \R^{k}_{>0} \times \R \, : \,
  D(\nu,ec) \le \delta, \, \sum_{j=1}^{k} \alpha{(j)} \nu_j = 
  \beta \sum_{j=1} ^{k} \nu_j \Big\} \, .
\]
By the convexity of the function $D$, this is a convex cone. 
Let $H$ be the linear subspace in $\R^{k} \times \R^{k} \times \R$ defined
by
\begin{eqnarray*}
  H \ & = & \ \Big\{ (\nu,c,\delta) \in \R^{k} \times \R^{k} \times \R \, : \,
    \sum\nolimits_{j=1}^k \alpha(j) \nu_j = \beta \sum\nolimits_{j=1}^k \nu_j \Big\} \\
       & = & \Big\{ \nu \in \R^{k} \, : \,
     \sum\nolimits_{j=1}^k (\beta_t - \alpha(j)_t) \nu_j = 0, \, 1 \le t \le n \Big\} 
     \times \R^k \times \R \, .
\end{eqnarray*}
By the general duality statement 
$(\{\nu \in \R^k \, : \, u^T \nu = 0\})^* = \myspan u$ for
any vector $u \in \R^k$, we obtain
\begin{eqnarray*}
  H^* & = & \myspan \{ (\beta_t - \alpha{(1)}_t, \ldots, \beta_t - \alpha{(k)}_t)^T 
  \, : \, 1 \le t \le n \}
  \times \{0\} \times \{0\} \\
  & = & \myspan \{ 
   ( 
     \beta_t - \alpha{(1)}_t, \ldots, \beta_t - \alpha{(k)}_t, 0, \ldots, 0
   )^T \, : \, 1 \le t \le n
  \} \, .
\end{eqnarray*}
Hence, applying Lemma~\ref{le:dualcone3} and using
again that $(C_1 \cap C_2)^{*} = (C_1^* + C_2^*)$ 
for two closed convex cones $C_1,C_2$,
the dual of $\widehat{C_\nc} := \widehat{C_\nc}(\alpha(1), \ldots, \alpha(k),\beta)$ is
 \begin{eqnarray}
  (\widehat{C_\nc})^*
  & = & \cl \Big\{(w,v,v_0) \in \R^{k} \times \R_{>0}^{k} \times \R_{>0} \, : \,
   v_0 \log \frac{v_0}{v_j} \le w_j, \, 1 \le j \le k \Big\} \label{eq:proj1} \\
  & & + \myspan \{ 
 (
     \beta_t - \alpha{(1)}_t, \ldots,
     \beta_t - \alpha{(k)}_t,
     0, \ldots, 0,
     0
   )^T \, : \, 1 \le t \le n
  \}. \nonumber
\end{eqnarray}
In order to obtain the projection $\pi$ of $(\widehat{C_\nc})^*$ 
on the $(v,v_0)$-coordinates, we substitute $w$ into the inequalities 
in~\eqref{eq:proj1} and obtain
\[
 \pi((\widehat{C_\nc})^*) \ = \
  \cl\Big\{(v,v_0) \in \R_{>0}^{k} \times \R_{>0} \, : \,
  \exists \tau \in \R^n \text { with }
   v_0 \log \frac{v_0}{v_j} \le (\beta - \alpha{(j)})^T \tau, \, 1 \le j \le k \Big\} \, .
\]
This is the desired dual cone $(C_\nc(\alpha(1), \ldots, \alpha(k),\beta))^*$.

In the case $\beta \not\in (2 \N_0)^n$, analogous to Lemma~\ref{le:dualcone3},
the dual cone is given by the Minkowski sum of $\widehat{C_\nc}$
and of the dual of
\[
  \cl \Big\{(\nu,c,\delta) \in \R^{k}_{+} \times \R^{k}_{>0} \times \R \, : \,
  D(\nu,ec) \le - \delta, \, \sum_{j=1}^{k} \alpha{(j)} \nu_j = 
  \beta \sum_{j=1} ^{k} \nu_j \Big\}.
\]
This yields the dual cone for the case $\beta \not\in (2 \N_0)^n$.
\end{proof}

\begin{rem}
In the situation of Lemma~\ref{le:dualcone4},
$(C_{\nc}(\alpha(1), \ldots, \alpha(k),\beta))^*$
can also be expressed as the closure of the conic hull of the image of $\R^{n}$
under the map $x \mapsto (x^{\alpha(1)}, \ldots, $ $x^{\alpha(k)}, x^{\beta})$.
This is because for a single circuit, the SONC cone coincides with the
cone of non-negative polynomials (see, for example,  
\cite{bpt-2012} for the dual cone of non-negative polynomials).
\end{rem}

We can now provide the proof of Theorem~\ref{th:dual-sonc}.

\begin{proof}[Proof of Theorem~\ref{th:dual-sonc}]
From the definition of $C_{\sonc}(\mathcal{A})$, we infer
\begin{eqnarray*}
  & & (C_{\sonc}(\mathcal{A}))^* \ = \ \bigcap_{A \in \bigcup_{k=1}^{n+1} I_k(\mathcal{A})} P_{n,A}^* \\
  & = & \bigcap_{A \in \bigcup_{k=2}^{n+1} I_k(\mathcal{A})}\big\{ (C_\nc(\alpha(1), \ldots, \alpha(k), \beta))^* \, : \,
    (\alpha(1), \ldots, \alpha(k), \beta) \in I_k(\mathcal{A}) \big\} 
\cap \bigcap_{A \in I_1(\mathcal{A})} P^*_{n,A}
\, .
\end{eqnarray*}
Observing $\bigcap_{A \in I_1(\mathcal{A})} P^*_{n,A} \ = \
\left\{(v_{\alpha})_{\alpha \in \mathcal{A}} \, \mid \, 
  v_{\alpha} \ge 0 \text { for } \alpha \in \mathcal{A} \cap (2 \N_0)^n \right\} 
$
and using Lemma~\ref{le:dualcone4}, we obtain for the dual cone 
$(C_{\sonc}(\mathcal{A}))^* \ $:
\begin{eqnarray*}
 & & 
 \Big\{ (v_{\alpha})_{\alpha \in \mathcal{A}} \, \mid \, 
   v_{\alpha} \ge 0 \text { for } \alpha \in \mathcal{A} \cap (2 \N_0)^n \, \wedge \,
  \forall k \ge 2 \text{ and } (\alpha{(1)}, \ldots, \alpha{(k)},\beta) 
  \in I_k(\mathcal{A}) \, : \, \\
    & & \qquad \exists v^* \ge |v_{\beta}| \; \,
\exists \tau \in \R^n \text{ with } v^* \log \frac{v^*}{v_{\alpha{(j)}}} \le (\beta - \alpha{(j)})^T \tau, \, 1  \le j \le k
  \Big\} \, ,
\end{eqnarray*}
where the degenerate cases of taking the logarithm are interpreted as described
in the statement of the theorem.

Note that for even $\beta$, the values $v_{\beta}$ are always non-negative,
so that taking the absolute value of $v_{\beta}$ is just done to allow a convenient
notation by avoiding the case distinction.
\end{proof}

\subsection*{The case of univariate quartics}

We illustrate Theorem~\ref{th:dual-sonc} by considering the case
of univariate quartics $(d=4)$. In particular, we derive a representation
of the dual cone in terms of polynomial inequalities (without any quantification
such as the variables $\tau$ in Theorem~\ref{th:dual-sonc}) and explicate
this description in terms of duality theory of plane algebraic curves.

The dual cone of non-negative univariate polynomials of degree at most $4$ is given by
$
  (\mathcal{P}[x]_{\le 4})^* \ = \ \{v=(v_0, \ldots, v_4) \in \R^5 \, : \, H_4(v) \succeq 0\} \, ,
$
where
\[
H_4(v) \ = \ \left( \begin{matrix}
    v_0 & v_1 & v_2 \\
    v_1 & v_2 & v_3 \\
    v_2 & v_3 & v_4
  \end{matrix} \right)
\]
is a Hankel matrix (see, e.g., 
 \cite{lasserre-positive-polynomials,laurent-survey}), that is,
\begin{align}
  (\mathcal{P}[x]_{\le 4})^* \ = \
  &  \{v \in \R^5 \, : \,  v_0, v_2, v_4 \ge 0, \, 
  v_0 v_2 - v_1^2 \ge 0, \, v_0 v_4 - v_2^2 \ge 0, \, \label{eq:pdual} \\
  & \: v_2 v_4 - v_3^2 \ge 0, \, v_0v_2v_4+2v_1v_2v_3-v_2^3-v_0v_3^2-v_1^2v_4 \ge 0 \} \ . \nonumber
\end{align}

For the dual of the univariate SONC cone
of univariate quartics, an inequality
representation can be obtained as a corollary of
Theorem~\ref{th:dual-sonc}:

\begin{cor}\label{co:univariate-quartics}
The dual of the univariate SONC cone $C_{\sonc}(4)$ is
\begin{eqnarray}
  (C_{\sonc}(4))^* & = &
     \{v \in \R^5 \, : \,  v_0, v_2, v_4 \ge 0, \, 
       v_0 v_2 - v_1^2 \ge 0, \, v_0^3 v_4 - v_1^4\ge 0, \, \label{eq:sonc4ineq} \\
    & & v_0 v_4 - v_2^2 \ge 0, \, v_0 v_4^3 - v_3^4 \ge 0, \,
       v_2 v_4 - v_3^2 \ge 0 \} \, . \nonumber
\end{eqnarray}
\end{cor}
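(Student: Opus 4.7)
The plan is to specialize Theorem~\ref{th:dual-sonc} to $n = 1$ and $\mathcal{A} = \mathcal{A}_4 = \{0, 1, 2, 3, 4\}$, and then eliminate the quantifiers $\exists v^*$ and $\exists \tau$ circuit by circuit. Since affine independence in $\R$ restricts to at most two points, only $I_1(\mathcal{A}_4)$ and $I_2(\mathcal{A}_4)$ contribute. The case $k = 1$ produces the non-negativity conditions $v_0, v_2, v_4 \ge 0$. For $k = 2$, one enumerates the pairs $\{\alpha(1), \alpha(2)\} \subseteq \{0, 2, 4\}$ together with the interior integer points $\beta \in \relinter(\conv\{\alpha(1), \alpha(2)\}) \cap \N_0$; this yields exactly the five circuits $(0,2,1)$, $(0,4,1)$, $(0,4,2)$, $(0,4,3)$, and $(2,4,3)$.

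For a generic circuit $(\alpha(1), \alpha(2), \beta)$ with barycentric coordinates $\mu_1, \mu_2 > 0$ satisfying $\beta = \mu_1 \alpha(1) + \mu_2 \alpha(2)$, the two inequalities in Theorem~\ref{th:dual-sonc} read $v^* \log(v^*/v_{\alpha(1)}) \le \mu_2(\alpha(2) - \alpha(1)) \tau$ and $v^* \log(v^*/v_{\alpha(2)}) \le -\mu_1(\alpha(2) - \alpha(1)) \tau$, using $\beta - \alpha(1) = \mu_2(\alpha(2) - \alpha(1))$ and $\beta - \alpha(2) = -\mu_1(\alpha(2) - \alpha(1))$. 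I would eliminate $\tau$ by forming the $\mu_1, \mu_2$-weighted sum of these two inequalities, which collapses them to the single monomial condition $v^* \le v_{\alpha(1)}^{\mu_1} v_{\alpha(2)}^{\mu_2}$; conversely, whenever this condition holds, the resulting interval of admissible $\tau$ is non-empty. Coupling with $v^* \ge |v_\beta|$ then reduces the existential quantifiers to $|v_\beta| \le v_{\alpha(1)}^{\mu_1} v_{\alpha(2)}^{\mu_2}$, and raising both sides to the common denominator of $(\mu_1, \mu_2)$ clears the fractional exponents (using $v_{\alpha(i)} \ge 0$) to give a polynomial inequality on $v$.

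Substituting the barycentric weights $(1/2, 1/2)$, $(3/4, 1/4)$, $(1/2, 1/2)$, $(1/4, 3/4)$, $(1/2, 1/2)$ corresponding to the five circuits listed above then produces precisely the inequalities $v_1^2 \le v_0 v_2$, $v_1^4 \le v_0^3 v_4$, $v_2^2 \le v_0 v_4$, $v_3^4 \le v_0 v_4^3$, and $v_3^2 \le v_2 v_4$ appearing in~\eqref{eq:sonc4ineq}. The main subtlety I anticipate concerns the degenerate cases where some $v_{\alpha(j)}$ vanishes, since the raw statement of Theorem~\ref{th:dual-sonc} then invokes the extended conventions $y \log(y/0) = \infty$ and $0 \log(0/y) = 0$ together with a closure operation. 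I would verify in these boundary situations that both the quantified condition and the resulting polynomial inequality force the same vanishing pattern on $v_\beta$, so that no extra constraints appear and the two descriptions coincide as closed sets.
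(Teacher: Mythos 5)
Your proposal is correct and follows essentially the same route as the paper: specialize Theorem~\ref{th:dual-sonc} to $\mathcal{A}_4=\{0,\ldots,4\}$, enumerate the five circuits of $I_2(\mathcal{A}_4)$, and eliminate $\tau$ and $v^*$ circuit by circuit to obtain the polynomial inequalities $v_{\beta}^{2/\mu_{\min}}\le v_{\alpha(1)}^{\cdot}v_{\alpha(2)}^{\cdot}$. Your explicit barycentric-weighted elimination of $\tau$ and the discussion of the degenerate cases $v_{\alpha(j)}=0$ merely spell out what the paper's proof treats tersely, so no further changes are needed.
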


\begin{proof}
For the SONC cone and its dual, we have 
$\mathcal{A}_4 = \{0, \ldots, 4\}$ and 
\begin{equation}
  \label{eq:i2}
I_2(\mathcal{A}_4) \ = \ \{(0,2,1),(0,4,1),(0,4,2),(0,4,3),(2,4,3)\}.
\end{equation}
Specializing Theorem~\ref{th:dual-sonc} to univariate quartics gives the conditions
$v_0, v_2, v_4 \ge 0$ as well as, say, for the circuit $(0,2,1)$ in~\eqref{eq:i2}:
\[
  \exists v_1^* \ge |v_1| \quad
  v_1^* \log \frac{v_1^*}{v_0} \le \tau \; \text{ and } \;
  v_1^* \log \frac{v_1^*}{v_2} \le -\tau,
\]
which gives the condition $v_0 v_2 - (v_1^*)^2 \ge 0$. This is equivalent to
$v_0 v_2 - v_1^2 \ge 0$. Similarly, the other circuits
in~\eqref{eq:i2} yield
\[
  v_0^3 v_4 - v_1^4 , \;
  v_0 v_4 - v_2^2 \ge 0, \;
  v_0 v_4^3 - v_3^4 , \;
  v_2 v_4 - v_3^2 \ge 0.
\]
\vspace*{-5ex}

\end{proof}

We illustrate the situation from the viewpoint of duality of plane algebraic curves.
For the dual of the cone $C_{\nc}(0,2,1)$
(and analogously, for $C_{\nc}(0,4,2)$, $C_{\nc}(2,4,3)$), 
the structure of the dual is reflected by the facts that
a polynomial $p_0 + p_1 x + p_2 x^2$ is non-negative if and only if
the matrix 
{\footnotesize
$\begin{pmatrix}
  p_0    & p_1/2 \\
  p_1/2 & p_2
\end{pmatrix}$}
is positive semidefinite and that the cone of positive 
semidefinite matrices is self-dual (see \cite{lasserre-positive-polynomials}). 
This gives the well-known
positive semidefiniteness condition on the moment sequence 
(see, e.g., \cite{lasserre-positive-polynomials}).

For the case $C_{\nc}(0,4,1)$, we start from the fact that
the polynomial $p = p_0 + p_1x + p_4 x^4$ is non-negative if
and only if the conditions on the circuit number
\begin{equation}
  \label{eq:p1}
  p_1 \ \le \ \left( \frac{p_0}{3/4} \right)^{3/4} \cdot \left( \frac{p_4}{1/4} \right)^{1/4}
\end{equation}
as well as $-p_1 \ \le \ \left( \frac{p_0}{3/4} \right)^{3/4} \cdot \left( \frac{p_4}{1/4} \right)^{1/4}$
are satisfied. Now consider the case of equality within the 
inequality~\eqref{eq:p1},
which defines a planar projective curve in the homogeneous variables $p_0, p_1, p_2$,
given by the polynomial
\[
  G(p_0,p_1,p_4) \ = \ \left( \frac{4}{3} p_0 \right)^3 (4 p_4) - p_1^4 \, .
\]
The dual curve of this projective plane algebraic curve can be computed
by considering the equations
\begin{eqnarray*}
  x \ = \ \lambda \frac{dG}{dx}(r,s,t), \quad
  y \ = \ \lambda \frac{dG}{dy}(r,s,t), \quad
  z \ = \ \lambda \frac{dG}{dz}(r,s,t), \quad
  x r + y s + z t = 0 \, ,
\end{eqnarray*}
and eliminating $r,s,t, \lambda$. In our situation, we have
\[
x \ = \ \lambda \cdot 3 \cdot \left( \frac{4}{3} r\right)^2 \cdot \frac{4}{3} \cdot 4s, \quad 
y \ = \ \lambda \cdot \left(-4 s^3\right), \quad
z \ = \ \lambda \cdot \left( \frac{4}{3}r\right)^3 \cdot 4.
\]
Using a computer algebra system, the elimination provides the desired equation
$x^3 z - y^4$, 
which confirms the inequality $v_0^3 v_4 - v_1^4 \ge 0$ 
in~\eqref{eq:sonc4ineq}.
Since $v_1$ occurs with even exponent, considering instead of~\eqref{eq:p1}
the version for $-v_1$ leads to the same equation in the curve viewpoint
of the dual.

As every polynomial in $C_{\sonc}(4)$ is non-negative, it is clear that
$(\mathcal{P}[x]_{\le 4})^* \subseteq (C_{\sonc}(4))^*$. To see this inclusion
directly from the inequalities in~\eqref{eq:pdual} and~\eqref{eq:sonc4ineq},
first observe that there are only two inequalities that appear in the representation of 
$(C_{\sonc}(4))^*$ but not in $(\mathcal{P}[x]_{\le 4})^*$:
$v_0^3 v_4 - v_1^4\ge 0$ and $v_0 v_4^3 - v_3^4 \ge 0$.
In order to see that for every $v$ satisfying the inequalities of~\eqref{eq:pdual},
these two inequalities are satisfied, we use the inequality 
$v_0 v_4 - v_2^2 \ge 0$ from \eqref{eq:pdual} to deduce
\begin{align*}
v_0^3 v_4 - v_1^4 & \ \ge \ v_0^2v_2^2-v_1^4 \ = \ (v_0v_2-v_1^2)(v_0v_2+v_1^2) \\
\text{ and } \,  v_0 v_4^3 - v_3^4 & \ \ge \ v_2^2v_4^2-v_3^4 \ = \ (v_2v_4-v_3^2)(v_2v_4+v_3^2).
\end{align*}
Since the inequalities
$v_0v_2-v_1^2 \ge 0$ and $v_2v_4-v_3^2 \ge 0$ appear in
\eqref{eq:sonc4ineq}, the validity of $v_0^3 v_4 - v_1^4\ge 0$ and 
$v_0 v_4^3 - v_3^4 \ge 0$ follows.

Moreover, $(\mathcal{P}[x]_{\le 4})^* \subsetneq (C_{\sonc}(4))^*$. 
A specific point in $(C_{\sonc}(4))^* \setminus (\mathcal{P}[x]_{\le 4})^*$
is, for example,
$v = (v_0, \ldots, v_4) = (2,0,1,1,1)^T$. And the inequalities 
in~\eqref{eq:pdual} and~\eqref{eq:sonc4ineq} give the representation
\[
  (\mathcal{P}[x]_{\le 4})^* \ = \
  (C_{\sonc}(4))^* \cap \{ v \in \R^5 \, : \, 
  v_0v_2v_4+2v_1v_2v_3-v_2^3-v_0v_3^2-v_1^2v_4 \ge 0\} \, .
\]

\begin{rem}Since the dual of the cone of non-negative polynomials,
in $n$ variables and of degree at most $d$ is a moment cone,
we can interpret the cones in Theorems~\ref{th:dual-sonc}, \ref{th:dual-sonc2}
and Corollary~\ref{co:univariate-quartics} as supersets of moment cones.
\end{rem}

\section{Dual programs of SONC programs\label{se:dualprograms}}

Let $p \in \R[x_1, \ldots, x_n]$ with 
$p(x) = \sum_{\alpha \in \mathcal{A}} c_{\alpha} x^{\alpha}$.
Set $c = \left( c_{\alpha}\right)_{\alpha \in \mathcal{A}} \in \R^{\mathcal{A}}$
and identify $c$ with the corresponding vector in $\R^{|\mathcal{A}|}$.
For the global optimization problem
\[
  \inf_{x \in \R^n} p(x) \, ,
\]
the general strategy to obtain lower bounds is to consider a conic
relaxation (see, e.g., \cite{lasserre-positive-polynomials}). For the
SONC cone, the relaxation is given by the conic program
\begin{equation}
  \label{eq:primal-program}
\begin{array}{rcl}
  p_{\sonc} & = & \sup_{\gamma \in \R} \gamma \\ [0.3ex]
  & & \text{s.t. } p - \gamma \in C_{\sonc}(\mathcal{A}) \, .
  \end{array}
\end{equation}
Its dual is the program
\begin{equation}
  \label{eq:dual-program}
  \begin{array}{rcl}
  p^*_{\sonc} & = & \inf_{v \in \R^{\mathcal{A}}} c^T v \\ [0.3ex]
  & & \text{s.t. } v \in (C_{\sonc}(\mathcal{A}))^*.
  \end{array}
\end{equation}
 
Note that for specific subclasses of polynomials, the optimization
problem~\eqref{eq:primal-program} can be formulated as a geometric
program (\cite{iliman-dewolff-siamopt}, for the class of so-called ST-polynomials) 
or a relative entropy program \cite{wang-2018}. In these cases, the
duality theories of geometric programming and relative entropy programming
then also yield formulations for the corresponding duals.
For the SAGE cone, Chandrasekaran and Shah have given a sufficient
optimality criterion \cite{chandrasekaran-shah-2016}. By transferring
their result to the dual SONC cone derived in~Theorem~\ref{th:dual-sonc},
we provide a sufficient optimality criterion in terms of the 
underlying primal-dual pair of optimization problems over the full
SONC cone. We first observe that any point $x \in \R^n$ naturally induces 
a point in the dual cone $(C_{\sonc}(\mathcal{A}))^*$:
 
 \begin{lemma}\label{le:dirac1}
 For any $x \in \R^n$, 
 we have $(x^{\alpha})_{\alpha \in \mathcal{A}} \in (C_{\sonc}(\mathcal{A}))^*$.
 \end{lemma}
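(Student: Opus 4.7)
The plan is to bypass Theorem~\ref{th:dual-sonc} entirely and verify the defining property of $(C_{\sonc}(\mathcal{A}))^*$ directly from the duality pairing~\eqref{eq:pairing}. The argument is essentially a one-line unfolding of definitions.

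First I would fix $x \in \R^n$ and set $v = (x^{\alpha})_{\alpha \in \mathcal{A}}$. For an arbitrary $f = \sum_{\alpha \in \mathcal{A}} c_{\alpha} x^{\alpha} \in C_{\sonc}(\mathcal{A})$, the definition of the SONC cone as a Minkowski sum over the sets $P_{n,A}$ of polynomials that are non-negative on $\R^n$ forces $f$ itself to be non-negative on $\R^n$; in particular $f(x) \ge 0$. The pairing~\eqref{eq:pairing} then yields
$$
  v(f) \;=\; \sum_{\alpha \in \mathcal{A}} c_{\alpha} v_{\alpha} \;=\; \sum_{\alpha \in \mathcal{A}} c_{\alpha} x^{\alpha} \;=\; f(x) \;\ge\; 0,
$$
so $v(f) \ge 0$ for every $f \in C_{\sonc}(\mathcal{A})$, which is exactly the condition $v \in (C_{\sonc}(\mathcal{A}))^*$.

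There is no obstacle of substance: the lemma merely records that the evaluation functional $f \mapsto f(x)$ is non-negative on any cone of pointwise non-negative polynomials, and $C_{\sonc}(\mathcal{A})$ is such a cone. One could, as a sanity check, instead verify the explicit characterization of Theorem~\ref{th:dual-sonc} by choosing, for each circuit $(\alpha(1),\ldots,\alpha(k),\beta) \in I_k(\mathcal{A})$, the witnesses $v^{\ast} = |x^{\beta}|$ and $\tau \in \R^n$ with $\tau_i = v^{\ast} \log |x_i|$ (using the stated conventions when some $x_i$ vanishes); since $\alpha(j) \in (2\N_0)^n$ gives $x^{\alpha(j)} = |x|^{\alpha(j)}$, each required inequality $v^{\ast} \log \frac{v^{\ast}}{v_{\alpha(j)}} \le (\beta - \alpha(j))^T \tau$ collapses to an identity. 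This route is strictly longer and offers no additional insight, so I would not pursue it.
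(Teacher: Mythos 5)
Your main argument is correct, and it takes a genuinely different (and more elementary) route than the paper. You verify the defining property of the dual cone directly: every $f \in C_{\sonc}(\mathcal{A})$ is a sum of polynomials non-negative on $\R^n$, hence globally non-negative, and under the pairing~\eqref{eq:pairing} the vector $v=(x^{\alpha})_{\alpha\in\mathcal{A}}$ acts as the evaluation functional $f \mapsto f(x) \ge 0$. The paper instead checks that $v$ satisfies the explicit description of $(C_{\sonc}(\mathcal{A}))^*$ from Theorem~\ref{th:dual-sonc}: for each circuit it exhibits the witnesses $v^* = |x^{\beta}|$ and $\tau_i = |x^{\beta}|\log|x_i|$, and then handles points $x$ with a zero coordinate by a limiting argument using that the dual cone is closed. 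Your route is shorter, needs neither Theorem~\ref{th:dual-sonc} nor any case distinction for vanishing coordinates, and works verbatim for any cone of globally non-negative polynomials; the paper's route has the side benefit of showing concretely how point evaluations realize the entropy-type inequalities of the dual description (in the spirit of the SAGE dual), which is a useful consistency check on Theorem~\ref{th:dual-sonc}. One small caveat about the alternative you sketch at the end: when some $x_i = 0$ the stated $\log$-conventions do not by themselves produce a finite $\tau\in\R^n$, so there you would still need the closure argument the paper uses; your primary argument, however, is unaffected by this.
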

 
\begin{proof}
First consider the case $x \in (\R \setminus \{0\})^n$ and set $v = \left( v_{\alpha} \right)_{\alpha \in \mathcal{A}} = \left( x^{\alpha} \right)_{\alpha \in \mathcal{A}}$.
Clearly $v_{\alpha} \ge 0$ for $\alpha \in \mathcal{A} \cap (2 \N_0)^n$.

Now let $k \ge 2$ and $(\alpha(1), \ldots, \alpha(k),\beta) \in I_k(\mathcal{A})$.
Consider
\begin{eqnarray*}
   |v_{\beta}| \log \frac{|v_{\beta}|}{v_{\alpha(j)}} 
   & = & |x^{\beta}| \left(\log |x^{\beta}| - \log x^{\alpha(j)} \right)
   \ = \ |x^{\beta}| \sum_{i=1}^n \log |x_i| (\beta_{i} - {\alpha(j)}_{i}) \, ,
\end{eqnarray*}
so that setting $\tau_i = |x^{\beta}| \log |x_i|$, $1 \le i \le n$, and
$v^* = |v_{\beta}|$ gives
\[
  v^* \log \frac{v^*}{v_{\alpha(j)}}
   \ \le \
   (\beta - \alpha(j))^T \tau \, .
\]
Hence, $v \in (C_{\sonc}(\mathcal{A}))^*$.

If one of the components of $x$ is zero, we still have 
$v = (x^{\alpha})_{\alpha \in C} \in (C_{\sonc}(\mathcal{A}))^*$, because $(C_{\sonc}(\mathcal{A}))^*$
is closed.
\end{proof}

We obtain the following sufficient optimization criterion.

\begin{thm}
\label{th:optimal-point}
Let $v \in \R^{\mathcal{A}}$ be an optimal solution 
of~\eqref{eq:dual-program}, and assume that there exists
$z \in \R^n$ with $v = (z^{\alpha})_{\alpha \in \mathcal{A}}$.
Then $z$ is an optimal solution of $p$, and hence 
$p^*_{\sonc} = \inf_{x \in \R^n} p(x)$.
\end{thm}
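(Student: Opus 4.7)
The plan is to combine Lemma~\ref{le:dirac1} with a weak-duality argument and the observation that moment-like vectors arising from actual points in $\R^n$ evaluate the dual objective to the polynomial value at that point.

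First I would record the key identity: for any $x \in \R^n$, the vector $v = (x^{\alpha})_{\alpha \in \mathcal{A}}$ satisfies
\[
  c^T v \ = \ \sum_{\alpha \in \mathcal{A}} c_{\alpha} \, x^{\alpha} \ = \ p(x)
\]
by the very definition of the duality pairing~\eqref{eq:pairing}. By Lemma~\ref{le:dirac1}, this vector is feasible for~\eqref{eq:dual-program}. Since the infimum in~\eqref{eq:dual-program} runs over a superset of $\{(x^{\alpha})_{\alpha \in \mathcal{A}} : x \in \R^n\}$, this immediately gives the comparison
\[
  p^*_{\sonc} \ \le \ \inf_{x \in \R^n} p(x).
\]

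Now I would exploit the extra hypothesis. If the optimal $v$ has the form $v = (z^{\alpha})_{\alpha \in \mathcal{A}}$ for some $z \in \R^n$, then optimality and the identity above yield
\[
  p^*_{\sonc} \ = \ c^T v \ = \ p(z).
\]
Combining the two displays gives $p(z) = p^*_{\sonc} \le \inf_{x \in \R^n} p(x) \le p(z)$, where the last step just uses that $z$ itself is a candidate in the infimum on the right. Equality throughout forces $p(z) = \inf_{x \in \R^n} p(x) = p^*_{\sonc}$, which is exactly the conclusion.

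There is essentially no hard step; the content of the proof has already been compressed into Lemma~\ref{le:dirac1}, which shows that the dual cone $(C_{\sonc}(\mathcal{A}))^*$ contains all the ``Dirac'' moment vectors coming from single points of $\R^n$. The only thing worth being careful about is noting that the argument does not require strong duality or any normalization beyond what is already built into these particular feasible points (the coordinate $v_{\mathbf{0}} = z^{\mathbf{0}} = 1$ is automatic), so no delicate constraint qualification is needed.
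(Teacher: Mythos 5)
Your proposal is correct and follows essentially the same route as the paper: the paper also invokes Lemma~\ref{le:dirac1} to conclude that $z$ minimizes $p$ and then reads off $\inf_{x \in \R^n} p(x) = p(z) = c^T v = p^*_{\sonc}$. Your write-up merely makes explicit the weak-duality comparison $p^*_{\sonc} \le \inf_{x \in \R^n} p(x)$ that the paper's one-line argument leaves implicit, which is a fine (and arguably clearer) presentation of the same idea.
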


\begin{proof}
Let $z \in \R^n$ such that $v =  (z^{\alpha})_{\alpha \in \mathcal{A}}$
is an optimal point for~\eqref{eq:dual-program}. Lemma~\ref{le:dirac1} then implies
that $z$ is a minimizer for $p$. Hence, 
\[
 \inf_{x \in \R^n} p(x) \ = \ p(z) \ = \ c^T v \ = \ p^*_{\sonc} \, ,
\]
which implies the claim.
\end{proof}

\section{Final remarks and open questions}

In the setup of sums of squares based relaxations for polynomial optimization,
the dual view of moments plays a central role, in particular in the situation of
constrained optimization (see, e.g., \cite{lasserre-positive-polynomials,laurent-survey}).
In~\cite{diw-2017}, hierarchical relaxation techniques for 
SONC-based constrained optimization have been developed. It remains
a future task to extend our results on the dual cone and the duality
aspects to these constrained settings.

Moreover, after the preprint of the present paper had been posted,
various other works have recently appeared which also pose further research 
challenges on the dual SONC cone and its relatives.
In particular, \cite{mcw-2018} gives a necessary condition for
the extreme rays of the SONC cone and of the SAGE cone, 
\cite{knt-2019} provides a common generalization of the SONC cone 
and the SAGE cone and provides
an exact characterization of the extremals of the SONC cone and of
the SAGE cone, and \cite{forsgard-de-wolff-2019} characterizes the algebraic
boundary of the SONC cone and its connection to discriminants and
to the Horn-Kapranov uniformization. It would be interesting to
understand those aspects also for the duals of theses cones,
for example to characterize the extremals of the dual SONC cone
and of the dual SAGE cone.

\medskip

\noindent
{\bf Acknowledgment.}
We thank the referees for their criticism and suggestions which 
  helped to improve the presentation.
 
\bibliography{bibdual-sonc}
\bibliographystyle{plain}

\end{document}